\newtheorem{thm}{Theorem}[section]
\newtheorem{lem}[thm]{Lemma}
\newtheorem{prop}[thm]{Proposition}
\newtheorem{rem}[thm]{Remark}
\newcommand{\tr}{\operatorname{tr}}
\newcommand{\id}{{\rm Id}}
\newcommand{\one}{{\mathbf 1}}
\newcommand{\Lip}{{\rm Lip}}
\renewcommand{\d}{{\rm d}}
\newcommand{\pp}{{\mathbb P}}
\newcommand{\ee}{{\mathbb E}}
\newcommand{\rr}{{\mathbb R}}
\newcommand{\nn}{{\mathbb N}}
\newcommand{\cc}{{\mathbb C}}
\renewcommand{\P}{{\mathrm P}}
\newcommand{\proj}{\pi}
\newcommand{\dist}{d}
\newcommand{\as}{\operatorname{-}\mathrm{a.s.}}
\newcommand\pt[1]{^{\otimes #1}}
\newcommand{\sta}{{\P(\cc^d)}}
\newcommand{\staalg}{\mathcal{B}}
\newcommand{\out}{\Omega}
\newcommand{\outalg}{\mathcal{O}}
\newcommand{\joint}{\sta\times \out}
\newcommand{\jointalg}{\mathcal{J}}
\begin{document}
\title{Limit theorems for Quantum Trajectories}
\author{Tristan Benoist}
\email{tristan.benoist@math.univ-toulouse.fr}
\address{Institut de Mathématiques, UMR5219, Université de Toulouse, CNRS, UPS, F-31062 Toulouse Cedex 9, France}
 
\author{Jan-Luka Fatras}
\email{jan-luka.fatras@ens-paris-saclay.fr}
\address{ENS Paris Saclay}

\author{Clément Pellegrini}
\email{clement.pellegrini@math.univ-toulouse.fr}
\address{Institut de Mathématiques, UMR5219, Université de Toulouse, CNRS, UPS, F-31062 Toulouse Cedex 9, France}

\maketitle

\begin{abstract}
   Quantum trajectories are Markov processes modeling the evolution of a quantum system subjected to repeated independent measurements. Under purification and irreducibility assumptions, these Markov processes admit a unique invariant measure -- see Benoist \emph{et al.} Probab. Theory Relat. Fields 2019. In this article we prove finer limit theorems such as Law of Large Numbers (LLN), Functional Central Limit Theorem, Law of Iterated Logarithm and Moderate Deviation Principle. The proof of the LLN is based on Birkhoff's ergodic theorem and an analysis of harmonic functions. The other theorems are proved using martingale approximation of empirical sums.
\end{abstract}

\tableofcontents

\section{Introduction}

Quantum trajectories are Markov processes modeling the evolution of a quantum system subjected to repeated independent measurements. A quantum system interacts with a series of independent identical probes. After each interaction a measurement is performed on the probe that just interacted.  Following the postulates of quantum mechanics, the measurement outcome and the resulting system state are random. Since the probes are independent and identical, the evolution of the system state is described by a homogeneous Markov chain. A typical experiment modeled by quantum trajectories is that of Serge Haroche's group (see \cite{guerlin_progressive_2007} for example). In this experiment, the system is a monochromatic electromagnetic field similar to a harmonic oscillator and the probes are atoms with two energy levels relevant for the experiment. The light field is measured and manipulated using the atoms and the measurements performed on them. Numerous quantum optics experiments are modeled by quantum trajectories -- see \cite{carmichael,haroche2006exploring,wisemanmilburn}.
 
From a mathematical point of view, these Markov chains are relatively singular. They are not generically $\varphi$-irreducible\footnote{The definition of $\varphi$-irreducibility is a standard generalization of the irreducibility criteria for countable state space Markov chains -- see \cite[\S 1.3.1]{MT}.} as we show in Section~\ref{KSKS}. This is not a complete surprise since they can be seen as place dependent iterated function systems (IFS) -- see the definition in \cite{barnsley_invariant_1988}. However, they do not satisfy the standard criteria of contractivity for IFS (see Section~\ref{KSKS}). Hence, standard results on uniqueness of invariant measures such as the one of \cite{barnsley_invariant_1988} cannot be applied. In \cite{benoist2019invariant} an approach based on the convergence of an estimator of the system state given the measurement outcomes was developed. Under common assumptions for quantum trajectories, the authors showed the Markov chain accepts a unique invariant measure and the convergence is exponentially fast towards this measure. The first assumption is the purification one, first introduced in \cite{Maassen}. In this seminal reference the authors showed that quantum trajectories tend to get closer to the set of extreme points (pure states) of the convex set they live in. They moreover showed that under the purification assumption, in large time limit, the Markov chain lives in this set. The second assumption concerns the average evolution of the system state. It is similar to the irreducibility assumption for finite state space Markov chains and leads to the ergodicity of the dynamical system formed by the sequence of measurement results (see \cite{KuMa1}). As mentioned in \cite{szczepanek2021ergodicity}, the probability measures describing the law of the measurement outcomes, are also known as Kusuoka measures as defined in \cite{kusuoka_dirichlet_1989}.

Since the pioneering works of Kümmerer and Maassen \cite{KuMa1,KuMa,Maassen} a large body of literature has been dedicated to limit theorems for the measurement outcomes.
Notably, Law of Large Numbers \cite{KuMa1}, Central Limit Theorems \cite{attal2015central,van2015sanov,carbone2015homogeneous} and Large Deviation Principles \cite{van2015sanov,carbone2015homogeneous,cuneo2019large} have been derived. Recently, extensions to non ergodic repeated quantum measurements have been made, see \cite{carbone2022generalized} and \cite{BePelLin} for a review of results and proof techniques, and for some extensions. The subject is still in development -- see \cite{benoist2018entropy,benoist2021entropy,girotti2022concentration, movassagh_ergodic_2021} for few examples. Physically relevant and mathematically pertinent examples of the rich behaviour measurement outcomes can exhibit can be found in \cite{benoist2021entropy}.

In the present article, we prove some limit theorems, not for the measurement outcomes, but for the system state, \emph{i.e.} the quantum trajectory $(\hat x_n)_{n}$. We study the time asymptotics of the empirical sum
$$S_n(g):=\sum_{k=0}^{n-1}g(\hat x_k)$$
for some function $g$.

We show a strong Law of Large Numbers (LLN), a Central Limit Theorem (CLT) and its functional version (FCLT), a Law of Iterated Logarithm (LIL) and a Moderate Deviation Principle (MDP). The proof of the LLN is a consequence of Birkhoff's stationary process ergodic theorem and the approximation of $\hat x_n$ by a random variable depending only on measurement outcomes established in \cite{benoist2019invariant}. Note that quantum trajectories are generally not positive Harris as required in \cite[Theorem~17.1.7]{MT}. Hence, not all bounded harmonic functions are constant. Only continuous harmonic functions are constant. We circumvent this issue by proving the probability the LLN holds is continuous with respect to the initial state of the system. The proofs of the other theorems are all based on a martingale approximation of $S_n(g)$ for $g$ Hölder continuous. The FCLT is derived using its standard version for martingales (see \cite[Theorem~4.1]{HallHeyde} for the version we use), the LIL proof follows from the original article \cite{stout1970martingale} and the MDP one is a consequence of \cite[Theorem~1.1]{Ga96}. The martingale approximation is a versatile tool. For example, a concentration bound similar to the one of \cite[Theorem~5]{girotti2022concentration} could also be derived. However, to our knowledge, it cannot be used to derive a large deviation principle. This question is still open for quantum trajectories.

\bigskip
We work in the same mathematical context as in \cite{benoist2019invariant} and use similar notations and definitions. The Hilbert space we consider is the canonical complex $d$ dimensional space $\mathbb{C}^d$ and especially its projective space $\P(\mathbb{C}^d)$ equipped with its Borel $\sigma$-algebra $\staalg$. 
For a non zero vector $x\in\mathbb C^d$, we denote $\hat x$ the equivalence class of $x$ in $\P(\mathbb C^d)$. For $\hat x\in \P(\cc^d)$, $x\in \cc^d$ is an arbitrary norm one representative of $\hat x$ and $\pi_{\hat x}$ is the orthogonal projector onto $\cc x$.
For a linear map  $v\in M_d(\mathbb C)$ we denote $v\cdot \hat{x} $ the element of the projective space represented by $v\,x$ whenever $v\,x\neq 0$. We equip $M_d(\mathbb C)$ with its Borel $\sigma$-algebra. We consider a measure $\mu$ on $M_d(\mathbb C)$ such that $v\mapsto \|v\|$ is square integrable, $$\int_{M_d(\mathbb C)} \|v\|^2\,\d\mu(v)<\infty,$$ and the following stochasticity condition is fulfilled,
\begin{equation}
\label{eq:stochastic family}
\int_{M_d(\mathbb C)} v^* v \,\mathrm{d} \mu(v) = \id_{\cc^d}. 
\end{equation}

A quantum trajectory is a realization of a Markov chain $(\hat{x}_n)$ on $\P(\mathbb{C}^d)$, defined by 
\[
\hat x_{n+1}=V_n\cdot \hat x_{n},
\]
where $V_n$ is a $M_d(\mathbb C)$-valued random variable with law $ ||v x_n||^2 \d \mu(v).$ More precisely, we study Markov chains associated with the transition kernel given for a set $S\in\staalg$ and $\hat x\in \P(\mathbb C^d)$ by
\begin{equation} \label{eq_deftranskernel}
\Pi(\hat x,S)=\int_{M_d(\cc)} \mathbf{1}_{S}(v\cdot \hat x)  \|v x\|^2 \d \mu(v).
\end{equation}
Every $\hat x_n$ can be written using a random product of matrices 
$$\hat x_n = V_{n}\ldots V_{1}\cdot \hat x_0,$$
with $(V_{n})_n$ non i.i.d.~but its law given by $\|v_n\dotsb v_1 x_0\|^2 \d\mu^{\otimes n}(v_1,\dotsc,v_n)$. Since the matrices are not i.i.d.~the results on products of i.i.d.~matrices (see \cite{Bougerol}) cannot be applied. Though, since $V_1x_0\neq 0$ almost surely, $V_1\cdot\hat x_0$ is almost surely well defined and we do not require the matrices be invertible. In \cite{benoist2019invariant} the authors showed that, assuming purification and matrix irreducibility, these Markov chains admit unique invariant measures $\nu_{inv}$. The matrix irreducibility assumption is weaker than the strong irreducibility one used for products of i.i.d. matrices. It only requires that there is a unique non trivial minimal common invariant subspace to the matrices charged by $\mu$. In the same reference, the convergence towards the invariant measure is shown to be geometric in Wasserstein distance uniformly with respect to the initial measure. This is our starting point. Indeed, such strong convergence result often leads to finer limit theorems for the chain. However, since $(\hat x_n)$ is in general not $\nu_{inv}$-irreducible (see Section~\ref{KSKS}), the usual proofs require adaptations.

First, in Section~\ref{LLNN}, we show a LLN for any continuous function $g$ on $\P(\mathbb C^d)$:
$$\frac 1n \sum_{k=0}^{n-1}g(\hat x_k)\mathop{\longrightarrow}_{n\to\infty} \ee_{\nu_{inv}}(g),\quad \text{a.s.}$$
whatever is the distribution $\nu$ of the initial state $\hat x_0$. This result cannot be extended to $L^1$ functions since, following \cite[Theorem~17.1.7]{MT}, it would imply quantum trajectories are positive Harris and therefore $\nu_{inv}$-irreduccible (see \cite[\S 9]{MT}) and, as already mentioned,  we provide a counter example to that property in Section~\ref{KSKS}. This result is a generalization of \cite[Theorem~5]{KuMa} which applies only to affine functions in $\pi_{\hat x}$. 

Second, in Section~\ref{Poisson}, we show that for any Hölder continuous function $g$, the Poisson equation admits a continuous solution $\tilde g$, that is
\begin{equation}\label{eq:Poisson eq}
    (I-\Pi) \tilde{g}=g -\ee_{\nu_{inv}}(g)=:\overline{g}.
\end{equation}
 The continuity of the solution is important since the LLN holds only for continuous functions. Then, as usual, the solution $\tilde{g}$ allows for the approximation of $$S_n(g):=\sum_{k=0}^{n-1}g(\hat x_k)$$
by a martingale $M_n(g)$ with uniformly bounded increments. This leads to several limit theorems inherited from limit theorems for martingales. In particular, in Section~\ref{sec:CLT} we show a CLT: 
$$\lim_{n\to\infty}\frac{S_n(\overline{g})}{\sqrt n}=\mathcal N(0,\gamma_g^2),\quad\mbox{in law, with }\gamma_g^2=\ee_{\nu_{inv}}(\tilde{g}^2-(\Pi \tilde{g})^2)$$
 and its functional version. In Section~\ref{LILL} we show a LIL:
 $$\limsup_{n\to\infty} \frac{\pm S_n(\overline{g})}{\sqrt{2n\log\log(n)}}=\gamma_g,\quad \mbox{a.s.}$$
 In Section~\ref{MDPP}, we show a MDP: for any function $n\mapsto a(n)$ such that $\lim_{n\to\infty} \frac{a(n)}{n}=0$ and $\lim_{n\to\infty}\frac{n}{a(n)^2}=0$ and $z\in \rr$,
 $$\lim_{n\to\infty}\sup_{\nu}\left|\frac{n}{a(n)^2}\log \ee_{\nu}\left(\exp\left(\tfrac{a(n)}{n}zS_n(\overline{g})\right)\right)-\tfrac12 z^2\gamma_g^2\right|=0,$$
 with the supremum taken over the distribution of $\hat x_0$. Moreover, if $\gamma_g^2>0$, for any Borel set $B\subset \rr$,
 \begin{align*}
\lim_{n\to\infty}\sup_{\nu}\frac{n}{a(n)^2}\log \pp_\nu(\tfrac{1}{a(n)}S_n(\overline{g})\in B)= -\inf_{y\in B}\frac{y^2}{2\gamma_g^2}.
 \end{align*}
 If $\gamma_g^2=0$, if $0$ is not in the closure of $B$,
 $$\lim_{n\to\infty}\sup_{\nu}\frac{n}{a(n)^2}\log \pp_\nu(\tfrac{1}{a(n)}S_n(\overline{g})\in B)=-\infty,$$
 if $0$ is in the interior of $B$,
 $$\lim_{n\to\infty}\sup_{\nu}\frac{n}{a(n)^2}\log \pp_\nu(\tfrac{1}{a(n)}S_n(\overline{g})\in B)=0.$$
 
 Notice that since $M_n(g)$ has uniformly bounded increments, the LLN could be proved using the martingale LLN and the density of Hölder continuous functions in the Banach space of continuous functions equipped with the supremum norm. However, since in general the existence of the solution to Poisson equation is not needed for the proof of the LLN, we preferred the proof presented here. Indeed, it may turnout useful in situations where the existence of a solution to Poisson equation cannot be proved easily, like when $\cc^d$ is replaced by an infinite dimensional Hilbert space.

\medskip
 There exists a continuous time version of quantum trajectories (see \cite{barchielli2009quantum, bouten2007introduction}). As shown in \cite{pellegrini2008existence,pellegrini2010markov,BaBeBe2}, they are approximations of the discrete time version we study here. Adapting our proofs, especially using the methods and results of \cite{benoist2021invariant} instead of \cite{benoist2019invariant} and up to possible standard technical adaptations, all our results translate to continuous time quantum trajectories.

 \medskip
Before we state and prove the main results, in Section~\ref{sec:notation}, we make precise the notation we use and recall some useful properties of quantum trajectories. At the end, in Section~\ref{KSKS}, we present an example of quantum trajectory to which our results apply yet the Markov chain does not verify the usual $\varphi$-irreducibility or contractivity assumptions.
\section{Notations and preliminaries}\label{sec:notation}

\subsection{Measurement process and Quantum trajectory}

In this article, we mostly follow the notations and definitions of \cite{benoist2019invariant}. We refer the reader to it for more details and comments. 

Let $\out:=M_d(\cc)^{\mathbb N}$. We equip it with its cylinder set $\sigma$-algebra. More precisely, let $\mathcal M$ be the Borel $\sigma$-algebra on $M_d(\cc)$. For $n\in\nn$, let $\outalg_n$ be the smallest $\sigma$-algebra on $\Omega$ making the sets $\{(v_1,v_2,\dotsc)\in \Omega: v_1\in A_1,\dotsc, v_n\in A_n\}$ measurable for any $A_1,\dotsc, A_n\in \mathcal M$. Then the smallest $\sigma$-algebra $\outalg$ containing $\outalg_n$ for all $n\in \nn$ makes $\Omega$ measurable. Let $\mathcal B$ be the Borel $\sigma$-algebra on $\P(\mathbb C^d)$, and denote
\[\jointalg_n=\mathcal B\otimes \outalg_n,\qquad \jointalg=\mathcal B\otimes \outalg.\]
This makes $\big(\P(\mathbb C^d)\times \Omega,\jointalg\big)$ a measurable space. We identify sub-$\sigma$-algebra $\{\emptyset,\P(\mathbb C^d)\}\times \outalg$ with $\outalg$, and equivalently identify any $\outalg$-measurable function $f$ with the $\jointalg$-measurable function $f$ satisfying $f(\hat{x},\omega) = f(\omega)$.

For $i\in\mathbb N$, we consider the random variables $V_i : \Omega \to  M_d(\mathbb C)$,
\begin{equation}
	V_i(\omega) = v_i \quad \mbox{for} \quad \omega=(v_1,v_2,\ldots), \label{eq:W}
\end{equation}
and we introduce $\mathcal O_n$-mesurable random variables $(W_n)$ defined for all $n\in\mathbb N$ as $$W_n=V_{n}\ldots V_{1}.$$ 

Let $\nu$ be a probability measure over $(\sta,\mathcal B)$. We extend it to a probability measure $\mathbb{P}_\nu$ over $(\sta\times\out,\jointalg)$ by setting, for any $S\in \staalg$ and any set $O_n \in \outalg_n$,
\begin{equation} \label{eq_defPnu}
\mathbb{P}_\nu(S \times O_n):=\int_{S\times O_n}  \|W_n(\omega)x\|^2 \d\nu(\hat x) \d\mu\pt n(\omega).
\end{equation}
Equation~\eqref{eq:stochastic family} ensures that~\eqref{eq_defPnu} defines a consistent family of probability measures and, by Kolmogorov's theorem, this defines a unique probability measure $\pp_\nu$ on $\joint$. In addition, the restriction of $\mathbb{P}_\nu$ to $\mathcal B\otimes \{\emptyset,\Omega\}$ is, by construction, $\nu$. In the sequel we shall refer systematically to $\mathbb P_\nu$ even if the involved r.v. does not imply $\mathcal O$ (this allows a unification of the notations all along the paper). 
\smallskip

We now define the random process $(\hat x_n)$. For $(\hat x, \omega)\in \joint$ let $\hat x_0(\hat x, \omega)=\hat x$. Note that for any $n$, the definition~\eqref{eq_defPnu} of $\pp_\nu$ imposes
\[\pp_\nu(W_n x_0 = 0)=0.\]
This allows us to define a sequence $(\hat x_n)$ of $(\jointalg_n)$-adapted random variables on the probability space $(\joint,\jointalg, \pp_\nu)$ by letting
\begin{equation} \label{eq_defxn}
\hat x_n:= W_n\cdot \hat x, 
\end{equation}
whenever the expression makes sense, i.e.\ for any $\omega$ such that $W_n(\omega) x\neq 0$, and extending it arbitrarily to the whole of $\Omega$. The process $(\hat x_n)$ on $(\Omega\times \P(\mathbb C^d),\jointalg, \mathbb{P}_\nu)$ has the same distribution as the Markov chain defined by $\Pi$ and initial probability measure $\nu$. We always work with this construction of the Markov chain. Namely, for any $n\in \nn$, $\hat x_n$ is a $\jointalg_n$-measurable random variable.

From a physical point of view, a sequence $\omega=(v_1,v_2,\ldots)$ is interpreted as the results of measurements that a physical apparatus records. Conditionally on these results, the Markov chain $(\hat x_n)$ is updated. Like hidden Markov chain models, $\omega=(v_1,v_2,\ldots)$ represents the observed process whereas $(\hat x_n)$ is not directly measured. Note that if $\mu$ is a sum of Dirac measures, that is for example
$$\mu=\sum_{i=1}^k\delta_{A_i},$$
the quantum trajectory is defined by
$$\hat x_{n+1}=A_i\cdot \hat x_n=\widehat{\frac{A_i x_n}{\Vert A_i x_n\Vert}}\,\,\textrm{with probability}\,\,\Vert A_i x_n\Vert^2.$$
These quantum trajectories are the most commonly encountered in the physics literature. A measurement apparatus produces a number $i\in\{1,\ldots,k\}$ as an outcome and the system state is updated conditionally to this information.

We focused our definitions on pure states, namely system states that can be represented by elements of $\P(\cc^d)$. Equivalently, these states are represented by rank one orthogonal projectors on $\cc^d$. It is often useful in quantum mechanics to consider a more general set of states, density matrices. They are elements of the convex hull of rank one projectors:
$$\mathcal D_d:=\operatorname{convex}\{\pi_{\hat x}:\hat x\in \P(\cc^d)\}=\{\rho\in M_d(\cc): \rho\geq 0, \tr\rho=1\}.$$
Quantum trajectory definition can be extended consistently to density matrices introducing the process $(\rho_n)$ defined by
$$\rho_{n+1}=\frac{V_{n+1} \rho_n V_{n+1}^*}{\tr(V_{n+1}\rho_n V_{n+1}^*)}\quad\mbox{with}\quad V_{n+1}\sim \tr(v^*v\rho_n)\d\mu(v)$$
conditioned on the value of $\rho_n\in \mathcal D_d$. However, since the set of pure states is stable under this random dynamics and our standing assumption \textbf{(Pur)} implies $\rho_n$ converges almost surely to the set of pure states as $n$ grows according to \cite{Maassen}, we limit ourselves to the pure state formulation of quantum trajectories.

Density matrices are useful to describe average states and restrictions of the measures $\pp_\nu$ to the outcome $\sigma$-algebra $\outalg$. To that end, for any probability measure $\nu$ over $\P(\cc^d)$, let
\begin{equation} \label{eq_defrhonu}
\rho_\nu:= \mathbb E_\nu(\proj_{\hat x}).
\end{equation}
By definition, $\rho_\nu\in\mathcal D_d$. 

We define probability measures over $\Omega$ that depend on a density matrix. For $\rho\in\mathcal{D}_d$ and any set $O_n \in \outalg_n$, let
\begin{equation} \label{eq_defPrho}
\pp^{\rho}(O_n):=  \int_{O_n}\tr\big(W_n(\omega) \rho \ W_n^*(\omega)\big) \mathrm{d} \mu\pt n(\omega). 
\end{equation}
Again using~\eqref{eq:stochastic family}, this defines a probability measure over $(\Omega,\outalg)$ through Kolmogorov extension theorem.

The measures $\pp_\nu$ can be expressed using the newly defined ones. For any $S\in\staalg$ and $A\in \outalg$,
\begin{equation}\label{eq:desintegration}
\pp_\nu(S\times A)=\int_{S}\pp^{\pi_{\hat x}}(A)\, \d\nu(\hat x).
\end{equation}
As proved in \cite[Proposition~2.1]{benoist2019invariant}, the marginal of $\mathbb{P}_\nu$ on $\outalg$ is the probability measure $\mathbb P^{\rho_\nu}$.

\medskip
We will manipulate some metric-related notion such as Hölder continuity. We thus equip $\P(\cc^d)$ with the metric
$$d(\hat x,\hat y)=\sqrt{1-|\langle x,y\rangle|^2}=\|\proj_{\hat x}-\proj_{\hat y}\|_\infty.$$

\subsection{Some preliminary results on $\Pi$}
The proof of geometric convergence towards the invariant measure in \cite{benoist2019invariant} relies on two assumptions:

\medskip
\noindent\textbf{Assumptions}\\
\noindent\textbf{(Pur)} Any orthogonal projector $\proj$ such that for any $n\in\nn$, $\proj v_1^*\ldots v_n^* v_n\ldots v_1 \proj \propto \proj$  for $\mu^{\otimes n}$-almost all $(v_1,\ldots,v_n)$, is of rank one.

\smallskip
\noindent \textbf{(Erg)} There exists a unique minimal subspace $E\neq\{0\}$ of $\cc^d$ such that $vE\subset E$ for all $v\in \operatorname{supp}\mu$.
\medskip

As mentioned in \cite{benoist2019invariant}, while formulated in terms of the measure $\mu$, assumption \textbf{(Erg)} depends only on the quantum channel
$$\phi:\rho\mapsto \int_{M_d(\mathbb C)} v\rho v^*\ \d\mu(v).$$
The assumption \textbf{(Erg)} is equivalent to $\phi$ accepting a unique fixed point $\rho_{inv}$ in $\mathcal D_d$. Then $E=(\ker \rho_{inv})^\perp$. Especially, assumption \textbf{(Erg)} with $E=\cc^d$ is equivalent to assuming $\phi$ is irreducible (see \cite[Section~6.2]{wolftour} for formulations of this notion of irreducibility).

Unless stated otherwise, all the results of the present article assume the same two assumptions hold. We do not mention them explicitly each time.
\medskip

We refer to \cite{benoist2019invariant} for a complete discussion regarding the link with the usual assumptions in the context of i.i.d product of random matrices (see \cite{Bougerol} for a reference on this subject). Let us mention that, as proved in \cite[Appendix~A]{benoist2019invariant}, \textbf{(Pur)} is equivalent to the contractivity assumption. Concerning \textbf{(Erg)}, as already mentioned in Introduction, it is weaker than the strong irreducibility condition. 

 The convergence obtained in \cite{benoist2019invariant} is expressed in terms of Wasserstein metric. A convergence in total variation is not accessible as is proved by the example of Section~\ref{KSKS}. Wasserstein metric of order $1$ between two probability measures $\sigma$ and $\tau$ on a metric space $(X,d)$ is defined by
 \begin{align}\label{eq:def W_1}
\mathcal W_1(\sigma,\tau)=\inf_{P}\int_{X\times X} d(x,y)\d P(x,y)
 \end{align}
 where the infimum is taken over all probability measures $P$ over $X\times X$ such that $P(\d x,X)=\sigma(\d x)$ and $P(X,\d y)=\tau(\d y)$.
 For $X$ compact, using Kantorovich--Rubinstein duality theorem it can be expressed as
$$\mathcal W_1(\sigma,\tau)=\sup_{f\in \Lip_1(X)}\left| \int_{X} f\,\d\sigma- \int _X f\, \d\tau\right|,$$
where $\Lip_1(X)=\{f:X\rightarrow\mathbb R \ \mathrm{s.t.}\ \vert f(x)-f(y)\vert\leq \dist(x,y)\}$ is the set of Lipschitz continuous functions with constant one.

Then, the main result of \cite{benoist2019invariant} is the following theorem, numbered Theorem~1.1 in the original reference.
\begin{thm}\label{thm:uniqueness}
The Markov kernel $\Pi$ accepts a unique invariant probability measure $\nu_{inv}$.

Moreover, there exists $m\in\{1,\ldots,d\}$, $C>0$ and $0<\lambda<1$ such that for any probability measure $\nu$ over $\big(\sta,\mathcal B\big)$,
\begin{equation}\label{eq:conv with period}
\mathcal W_1\left(\frac1m\sum_{r=0}^{m-1} \nu\Pi^{mn+r}, \nu_{inv}\right)\leq C \lambda^n.
\end{equation}
\end{thm}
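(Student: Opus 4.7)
The plan is to prove both parts of the theorem at once by combining two ingredients: a Perron--Frobenius analysis of the quantum channel $\phi$ extracted from \textbf{(Erg)}, and a geometric shrinking of the ``image diameter'' of $\sta$ under the random product $W_n$ extracted from \textbf{(Pur)}. These are then glued together by a one-sequence coupling and Kantorovich--Rubinstein duality to bound the Wasserstein-1 distance.

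First I would analyze $\phi$. Under \textbf{(Erg)} the standard Perron--Frobenius theory for irreducible completely positive maps on $\mathcal D_d$ produces a unique fixed point $\rho_{inv}$ with support $E=(\ker\rho_{inv})^\perp$, and the peripheral spectrum of $\phi$ restricted to operators supported in $E$ is exactly the group of $m$-th roots of unity for a unique $m\in\{1,\ldots,d\}$. A Jordan-form argument then gives
\begin{align*}
\Big\|\tfrac1m\sum_{r=0}^{m-1}\phi^{mn+r}(\rho)-\rho_{inv}\Big\|_1\leq C_0\lambda_0^n, \quad \rho\in\mathcal D_d,
\end{align*}
for some $\lambda_0<1$. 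Combined with~\eqref{eq:desintegration} and the fact that the marginal of $\pp_\nu$ on $\outalg$ is $\pp^{\rho_\nu}$, this already forces every $\Pi$-invariant measure $\nu$ to satisfy $\phi(\rho_\nu)=\rho_\nu$, hence $\rho_\nu=\rho_{inv}$; invariant measures are distinguished only within the fibers of $\nu\mapsto\rho_\nu$. Second, I would convert \textbf{(Pur)} into a quantitative statement on $D_n(\omega):=\operatorname{diam}\{W_n(\omega)\cdot\hat z:\hat z\in\sta\}$. The map $\hat z\mapsto v\cdot\hat z$ contracts the Fubini--Study metric except in the direction of the top singular vector of $v$, and with respect to $\pp^{\rho_{inv}}$ the quantity $D_n$ is a bounded supermartingale. \textbf{(Pur)} excludes the only fixed points of the reverse dynamics on the set of nontrivial orthogonal projectors, so a compactness argument on that set yields $n_0$ and $\eta>0$ with $\ee^{\rho_{inv}}(D_{n_0})\leq 1-\eta$; iterating along $n_0$-blocks gives $\ee^{\rho_{inv}}(D_n)\leq C_1\lambda_1^n$ for some $\lambda_1<1$.

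Third I would assemble. Existence follows from Krylov--Bogolyubov applied to $\frac1{mN}\sum_{k<mN}\nu\Pi^k$, since $\Pi$ is Feller on the compact space $\sta$. For the geometric bound, couple two copies of the chain started from $\nu$ and from $\nu_{inv}$ by driving them with the \emph{same} outcome sequence drawn from $\pp^{\rho_{inv}}$, absorbing the mismatch via the Radon--Nikodym density $\tr(W_n\pi_{\hat x}W_n^*)/\tr(W_n\rho_{inv}W_n^*)$. For $f\in\Lip_1(\sta)$, this gives
\begin{align*}
\Big|\int f\,\d(\nu\Pi^{mn+r})-\int f\,\d\nu_{inv}\Big|\leq \ee^{\rho_{inv}}\big[D_{mn+r}\,\cdot\,\text{RN factor}\big],
\end{align*}
and averaging $r$ over $\{0,\ldots,m-1\}$ lets the first step control the Radon--Nikodym factor at rate $\lambda_0^n$ while the second controls $D_{mn+r}$ at rate $\lambda_1^n$. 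Taking $\lambda=\max(\lambda_0,\lambda_1)$ yields~\eqref{eq:conv with period}, and uniqueness of $\nu_{inv}$ is the special case where the left argument is any invariant candidate.

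The main obstacle will be step two: \textbf{(Pur)} is only qualitative, and a soft compactness argument gives no rate. One has to quantify how far $\sum_k v_k^*\pi v_k$ is from a multiple of $\pi$ on a neighborhood of each rank-$\geq 2$ projector, and show that this defect is uniformly bounded below on the compact set of such $\pi$. A related subtlety in step three is that the Radon--Nikodym factor can be very large on exceptional outcome sequences; obtaining uniformity in $\nu$ in~\eqref{eq:conv with period} requires pairing the $L^1$-type diameter estimate with a matching control of these exceptional events through the channel decay of the first step, which is exactly why the factor $m$ and the Cesàro average are unavoidable in the statement.
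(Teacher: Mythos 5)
First, a point of order: the present paper does not prove this theorem at all --- it is imported verbatim from \cite{benoist2019invariant} (Theorem~1.1 there), so your proposal is competing with that reference rather than with anything in this text. Your architecture (Perron--Frobenius for the channel $\phi$, a quantitative contraction extracted from \textbf{(Pur)}, a same-outcome coupling) is indeed the architecture of the cited proof, and your step one is essentially correct. But your step two rests on a false claim: the map $\hat z\mapsto v\cdot\hat z$ does \emph{not} contract the metric $d$ away from the top singular direction. For $v=\operatorname{diag}(1,\epsilon)$ and $\hat x,\hat y$ represented by $(\epsilon,1)$ and $(-\epsilon,1)$ one has $d(\hat x,\hat y)=O(\epsilon)$ while $v\cdot\hat x$ and $v\cdot\hat y$ are represented by $(1,1)$ and $(-1,1)$, so $d(v\cdot\hat x,v\cdot\hat y)=1$. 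Hence $D_n$ is not pathwise decreasing and is not obviously a supermartingale; the contraction holds only in expectation, because the weight $\|vx\|^2$ suppresses exactly the outcomes along which the map expands. Turning \textbf{(Pur)} into $\ee^{\rho}(D_n)\leq C_1\lambda_1^n$ therefore requires first exhibiting a genuinely submultiplicative quantity (in \cite{benoist2019invariant} it is built from the two largest singular values of $W_n$) to which the block/compactness argument you sketch can be applied and then iterated; the supermartingale shortcut is not available.

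The more serious gap is in step three. The Radon--Nikodym factor $\tr(W_N\pi_{\hat x}W_N^*)/\tr(W_N\rho_{inv}W_N^*)$ is \emph{not} controlled by the channel decay: its $L^1(\pp^{\rho_{inv}})$-distance to $1$ equals $\|\pp^{\rho_\nu}-\pp^{\rho_{inv}}\|_{TV}$ restricted to $\outalg_N$, which is nondecreasing in $N$ and is only bounded by $\|\rho_\nu-\rho_{inv}\|_1$ (cf.\ Lemma~\ref{dist_var_totale}); it does not tend to $0$, with or without the Cesàro average over $r$. So a one-shot coupling over the whole trajectory leaves a non-vanishing error term of order $\|f\|_\infty\|\rho_\nu-\rho_{inv}\|_1$. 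The missing idea is a time splitting $N=k+(N-k)$: replace $\hat x_N$ by an estimator measurable with respect to the late outcomes $v_{k+1},\dotsc,v_N$ alone, at the cost of an error $\ee(D_{k,N})\leq C_1\lambda_1^{N-k}$ where $D_{k,N}$ is the image diameter of $V_N\dotsb V_{k+1}$, and only then compare laws: the marginal of $\pp^{\rho}$ on those late outcomes is $\pp^{\phi^k(\rho)}$, so the channel estimate of your step one applies and contributes $C_0\lambda_0^{k}$ (after the Cesàro average in $r$ handles the period $m$). Choosing $k\sim N/2$ then yields \eqref{eq:conv with period}. Without this localization to the late outcomes, the argument as you wrote it does not close.
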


We finish these preliminaries with a result that was overlooked in \cite{benoist2019invariant}. Indeed no proof of the existence of an invariant measure was provided. An invariant measure exists if $\Pi$ maps continuous functions to continuous function. In other words if $\Pi$ is Feller. As we will need this property, we provide a short proof. Note that the conclusion of the proposition holds even if \textbf{(Erg)} and \textbf{(Pur)} do not hold.
\begin{prop}\label{prop:feller}
The operator $\Pi$ is Feller, that is for every continuous function $f$, $\Pi f$ is continuous.
\end{prop}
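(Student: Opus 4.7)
The plan is to apply the dominated convergence theorem to the defining integral
\[
\Pi f(\hat x)=\int_{M_d(\cc)} f(v\cdot \hat x)\,\|vx\|^2\,\d\mu(v),
\]
after reducing convergence in $\P(\cc^d)$ to convergence of norm-one representatives in $\cc^d$. Fix a continuous $f$ on the compact set $\P(\cc^d)$, so $f$ is bounded, and take a sequence $\hat x_n\to \hat x$. Because the metric $d(\hat x,\hat y)=\sqrt{1-|\langle x,y\rangle|^2}$ comes from choosing unit vectors, one can pick norm-one representatives with $x_n\to x$ in $\cc^d$ (after adjusting phases so that $\langle x_n,x\rangle \in \rr_{\geq 0}$).

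Next I would establish pointwise convergence of the integrand for $\mu$-almost every $v$, splitting into two cases. If $vx\neq 0$, then $vx_n\to vx\neq 0$, hence $v\cdot \hat x_n\to v\cdot \hat x$ in $\P(\cc^d)$ (the map $y\mapsto \widehat{y}$ being continuous on $\cc^d\setminus\{0\}$), and continuity of $f$ combined with $\|vx_n\|^2\to\|vx\|^2$ gives
\[
f(v\cdot \hat x_n)\,\|vx_n\|^2 \longrightarrow f(v\cdot \hat x)\,\|vx\|^2.
\]
If instead $vx=0$, then $\|vx_n\|^2\to 0$ and boundedness of $f$ forces $f(v\cdot\hat x_n)\,\|vx_n\|^2\to 0$; with the convention that the integrand is set to $0$ whenever its weight $\|v\cdot\|^2$ vanishes, the limit agrees with $f(v\cdot\hat x)\|vx\|^2$.

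Finally, the estimate $|f(v\cdot \hat x_n)|\,\|vx_n\|^2\le \|f\|_\infty \|v\|^2$ holds for all $n$, and by the standing hypothesis $\int\|v\|^2\,\d\mu(v)<\infty$, the function $v\mapsto \|f\|_\infty \|v\|^2$ is an integrable dominant. Dominated convergence then yields $\Pi f(\hat x_n)\to \Pi f(\hat x)$, proving $\Pi f$ is continuous.

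The only delicate point is the measure-zero (with respect to $\|vx\|^2\,\d\mu(v)$) set on which $v\cdot \hat x$ is undefined; the argument succeeds precisely because the weighting factor $\|vx\|^2$ in the integrand vanishes there, so the discontinuity of $v\mapsto v\cdot \hat x$ at $vx=0$ causes no harm. Everything else is routine continuity and a uniform $L^1$ bound.
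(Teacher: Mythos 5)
Your proof is correct and follows the same strategy as the paper's: dominate the integrand $f(v\cdot\hat x)\|vx\|^2$ by the $\mu$-integrable function $\|f\|_\infty\|v\|^2$ and apply dominated convergence, handling the set $\{v:vx=0\}$ via the vanishing weight. You simply spell out the pointwise-convergence step (choice of representatives, case split on $vx=0$) in more detail than the paper does.
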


\begin{proof}
Let $f$ be a continuous function on $\P(\mathbb C^d)$, then for all $\hat x$, we have
\begin{equation}
    \Pi f(\hat x)=\int_{M_d(\mathbb C)}f(v\cdot\hat x)\Vert vx\Vert^2\d\mu(v).
\end{equation}
We need to show that $\hat x\mapsto \Pi f(\hat x)$ is continuous.
Since $\P(\mathbb C^d)$ is compact, $f$ is bounded and
$$\psi:\hat x\mapsto f(v\cdot\hat x)\Vert vx\Vert^2$$
is continuous or can be extended to a continuous function whenever $vx=0$. The map $\psi$ is bounded by $\Vert f\Vert_\infty\Vert v\Vert^2$ which is independent of $\hat x$ and $\mu$ integrable since $v\mapsto \|v\|$ is assumed square integrable with respect to $\mu$. Lebesgue's dominated convergence theorem implies the continuity and the proposition is proved.
\end{proof}

\section{Law of Large Numbers}\label{LLNN}

This section is devoted to the Law of Large Numbers. We show the following theorem.

\begin{thm}\label{thm_LLN}
If the assumptions \textbf{(Erg)} and \textbf{(Pur)} hold, for every initial measure $\nu$ over $(\P(\cc^d),\staalg)$ and every continuous function $g$ on $\mathbb \P(\mathbb C^d)$,
$$\lim_{n\to\infty}\tfrac 1n S_n(g)=\lim_{n\to\infty}\frac 1n \sum_{k=0}^{n-1}g(\hat x_k)=\ee_{\nu_{inv}}(g), \quad \pp_\nu-a.s.$$
\end{thm}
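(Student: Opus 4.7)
My plan is to prove the LLN first in the stationary case $\nu=\nu_{inv}$ using Birkhoff's ergodic theorem, and then extend it to an arbitrary initial law via a harmonic function argument that relies on the Feller property (Proposition~\ref{prop:feller}) and the convergence in Theorem~\ref{thm:uniqueness}.

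For the stationary case, I note that under $\pp_{\nu_{inv}}$ the trajectory $(\hat x_n)$ is stationary, and its marginal on $(\Omega,\outalg)$ is $\pp^{\rho_{inv}}$, invariant under the left shift $T$. Assumptions \textbf{(Erg)}+\textbf{(Pur)} together with the uniqueness part of Theorem~\ref{thm:uniqueness} (equivalently, uniqueness of the fixed point $\rho_{inv}$ of $\phi$) imply ergodicity of $(\Omega,\outalg,\pp^{\rho_{inv}},T)$. The key input from \cite{benoist2019invariant} is the existence of a measurable map $\hat\Psi:\Omega\to\sta$ built only from the observations, with the property that $d(\hat x_n,\hat\Psi\circ T^n)\to 0$ $\pp_{\nu_{inv}}$-almost surely, i.e.\ the trajectory asymptotically forgets its initial state. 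Since $g$ is continuous on the compact space $\sta$, I may replace $g(\hat x_k)$ by $g(\hat\Psi\circ T^k)$ up to an error that vanishes in Cesàro mean; Birkhoff applied to $g\circ\hat\Psi$ then delivers $\tfrac1n S_n(g)\to \ee[g\circ\hat\Psi]=\ee_{\nu_{inv}}(g)$ $\pp_{\nu_{inv}}$-a.s.

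For a general initial law $\nu$, introduce
$$h(\hat x):=\pp_{\delta_{\hat x}}\!\Bigl(\tfrac1n S_n(g)\to\ee_{\nu_{inv}}(g)\Bigr).$$
The underlying event is tail-measurable in the chain, so by the Markov property $\Pi h=h$, and $h$ is a bounded harmonic function. The stationary case together with the disintegration~\eqref{eq:desintegration} gives $h=1$ $\nu_{inv}$-almost everywhere. To conclude, it suffices to show that $h$ is continuous, since a continuous harmonic function must be constant: the identity $h=\Pi^n h$, the Feller property, and the averaged Wasserstein convergence of $\nu\Pi^{mn+r}$ to $\nu_{inv}$ uniformly in $\nu$ force $h(\hat x)=\ee_{\nu_{inv}}(h)=1$ pointwise. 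Averaging against any initial law $\nu$ then yields the theorem.

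The main obstacle is the continuity of $h$. My proposed approach is to couple the trajectories starting at two nearby points $\hat x$ and $\hat y$ through the dominating observation measure $\pp^{(\pi_{\hat x}+\pi_{\hat y})/2}=\tfrac12(\pp^{\pi_{\hat x}}+\pp^{\pi_{\hat y}})$: under this measure, purification forces the mixed state $W_n(\pi_{\hat x}+\pi_{\hat y})W_n^*/\tr(\cdots)$ to become pure, which is only possible if both $W_n\cdot\hat x$ and $W_n\cdot\hat y$ converge to a common random pure state. Consequently the LLN events associated with $\hat x$ and $\hat y$ coincide modulo a $\pp^{(\pi_{\hat x}+\pi_{\hat y})/2}$-null set, and absolute continuity transfers this identification to $\pp^{\pi_{\hat x}}$ and $\pp^{\pi_{\hat y}}$. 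The delicate point, where the real technical work will lie, is to combine this with the (linear and hence continuous) dependence of $\pp^\pi$ on $\pi$ on cylinder $\sigma$-algebras and a careful approximation of the tail LLN event by cylinder events—ultimately powered by the geometric Wasserstein estimate of Theorem~\ref{thm:uniqueness}.
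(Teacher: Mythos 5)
Your overall architecture (Birkhoff's theorem in the stationary case, then a bounded harmonic function $h$ shown to be continuous and hence constant) is exactly the paper's, but your stationary step contains a genuine gap. The input from \cite{benoist2019invariant} is \emph{not} a single observable map $\hat\Psi:\out\to\sta$ with $d(\hat x_n,\hat\Psi\circ T^n)\to0$. The estimator constructed there is $\hat y_n=W_n\cdot\hat z_n$ with $\hat z_n$ the maximum-likelihood initial state, so $\hat y_n$ is $\outalg_n$-measurable, i.e.\ a function of the \emph{first} $n$ observations $(v_1,\dotsc,v_n)$, whereas $\hat\Psi\circ T^n$ would have to be a function of the \emph{future} observations $(v_{n+1},v_{n+2},\dotsc)$. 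No such future-measurable estimator is available (nor is the identity $\ee^{\rho_{\nu_{inv}}}[g\circ\hat\Psi]=\ee_{\nu_{inv}}(g)$ justified), so $\frac1n\sum_{k}g(\hat y_k)$ is not a Birkhoff sum and ``Birkhoff applied to $g\circ\hat\Psi$'' is not a licit step. The repair is to run Birkhoff on the skew product: first check that $\pp_{\nu_{inv}}$ is invariant under $\Theta:(\hat x,v_1,v_2,\dotsc)\mapsto(v_1\cdot\hat x,v_2,\dotsc)$ (a short computation using $\nu_{inv}\Pi=\nu_{inv}$), obtain a $\Theta$-invariant limit $X_g$ of $\frac1nS_n(g)$, and only \emph{then} use $d(\hat x_n,\hat y_n)\to0$ to conclude that $X_g$ coincides with the $\outalg$-measurable limit of $\frac1n\sum_k g(\hat y_k)$, hence is constant by ergodicity of $\pp^{\rho_{\nu_{inv}}}$ under the left shift, and finally equals $\ee_{\nu_{inv}}(g)$ by boundedness of $g$ and stationarity.

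Your route to the continuity of $h$ is workable but heavier than necessary, and one attribution is off. You need two ingredients: (i) replacement of the LLN event by an $\outalg$-measurable event independent of the initial point — your two-point purification coupling would re-derive this, but the relative weights $\tr(W_n\pi_{\hat x}W_n^*)/\tr(W_n(\pi_{\hat x}+\pi_{\hat y})W_n^*)$ can degenerate and controlling them is precisely the content of \cite[Proposition~3.5]{benoist2019invariant}, which you may as well invoke directly via the estimators $\hat y_n$; and (ii) the bound $\sup_{A\in\outalg}|\pp^{\pi_{\hat x}}(A)-\pp^{\pi_{\hat y}}(A)|\le\|\pi_{\hat x}-\pi_{\hat y}\|_1\le 2\,d(\hat x,\hat y)$, which follows from linearity of $\rho\mapsto\pp^\rho$ on cylinder sets together with the standard fact that total variation distance is attained on a generating algebra. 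This step is elementary and does not rest on the geometric Wasserstein estimate of Theorem~\ref{thm:uniqueness}, contrary to what you suggest; that estimate enters only to show that continuous harmonic functions are constant.
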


As explained in Section~\ref{KSKS}, this theorem cannot be extended to every $L^1(\pp_{\nu_{inv}})$ function since it would imply the Markov chain $(\hat x_n)$ is $\nu_{inv}$-irreducible as a consequence of \cite[Theorem~17.1.7]{MT}. In Proposition~\ref{prop:no LLN KS} we provide an explicit example of $L^1$ function $g$ for which $S_n(g)=0$ for every $n\geq $ while $\nu_{inv}(g)=1$.
\subsection{Harmonic functions}

We recall that, in the context of Markov chains, harmonic functions are measurable functions $f$ such that $\Pi f=f$. They are deeply related to ergodic properties of Markov chains -- see \cite[\S 17.1]{MT}.

\begin{prop}\label{prop:cont_harmo_const}
    If the assumptions \textbf{(Erg)} and \textbf{(Pur)} hold, all continuous harmonic functions for $\Pi$ are constant.
\end{prop}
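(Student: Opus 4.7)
The plan is to reduce the statement to a direct application of Theorem \ref{thm:uniqueness}. Let $f$ be a continuous harmonic function, so that $\Pi f = f$ and therefore $\Pi^k f = f$ for every $k \in \nn$. Evaluating at a point $\hat x \in \P(\cc^d)$ and rewriting using the dual action on measures, this gives
\[
f(\hat x) = \int_{\P(\cc^d)} f \, \d(\delta_{\hat x}\Pi^k)
\]
for every $k$, and averaging the $m$ successive values $k = mn, mn+1, \ldots, mn+m-1$ yields
\[
f(\hat x) = \int_{\P(\cc^d)} f \, \d\mu_n^{\hat x}, \qquad \mu_n^{\hat x} := \frac{1}{m}\sum_{r=0}^{m-1}\delta_{\hat x}\Pi^{mn+r}.
\]

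Applying Theorem \ref{thm:uniqueness} with $\nu=\delta_{\hat x}$, the measures $\mu_n^{\hat x}$ converge to $\nu_{inv}$ in $\mathcal{W}_1$ at geometric rate. I would then pass to the limit in the integral above to obtain $f(\hat x) = \ee_{\nu_{inv}}(f)$, which is independent of $\hat x$, proving that $f$ is constant.

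The only non-trivial point is transferring $\mathcal{W}_1$ convergence, which by Kantorovich--Rubinstein duality controls integrals only against Lipschitz functions, into convergence of the integral of the merely continuous function $f$. I would handle this by a standard approximation: since $\P(\cc^d)$ is compact, Lipschitz functions are uniformly dense in $C(\P(\cc^d))$, so given $\varepsilon > 0$ one can pick $g_\varepsilon \in \Lip(\P(\cc^d))$ with $\|f-g_\varepsilon\|_\infty < \varepsilon$ and Lipschitz constant $L_\varepsilon$, and write
\[
\bigl|\tsum f\,\d\mu_n^{\hat x} - \tsum f\,\d\nu_{inv}\bigr| \leq 2\varepsilon + L_\varepsilon \, \mathcal{W}_1(\mu_n^{\hat x},\nu_{inv}).
\]
Taking $n \to \infty$ and then $\varepsilon \to 0$ concludes the argument. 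The whole proof is short; the conceptual content is entirely carried by Theorem \ref{thm:uniqueness}, and the only minor obstacle is the Lipschitz-to-continuous approximation, which is routine on a compact metric space.
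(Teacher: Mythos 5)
Your proof is correct, and it takes a genuinely different route from the paper's, though both ultimately rest on Theorem~\ref{thm:uniqueness}. You use the \emph{quantitative} part of that theorem: harmonicity gives the exact identity $f(\hat x)=\int f\,\d\mu_n^{\hat x}$ with $\mu_n^{\hat x}=\frac1m\sum_{r=0}^{m-1}\delta_{\hat x}\Pi^{mn+r}$, and the geometric $\mathcal W_1$-convergence of $\mu_n^{\hat x}$ to $\nu_{inv}$ then forces $f(\hat x)=\ee_{\nu_{inv}}(f)$; the only point needing care, upgrading control of Lipschitz test functions to control of the merely continuous $f$, is correctly handled by uniform density of Lipschitz functions on the compact space $\P(\cc^d)$. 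The paper instead forms the full Ces\`aro averages $\overline\nu_n(\hat x)=\frac1n\sum_{k=0}^{n-1}\delta_{\hat x}\Pi^k$, extracts a weak limit point by Prokhorov's theorem, shows (via the Feller property) that any such limit is $\Pi$-invariant, and identifies it with $\nu_{inv}$ using only the \emph{uniqueness} assertion of Theorem~\ref{thm:uniqueness}. Your argument is shorter and avoids subsequence extraction; the paper's argument buys robustness, since it would go through in any situation where one knows only uniqueness of the invariant measure and the Feller property, without any rate of convergence --- which matters for the kind of infinite-dimensional generalizations the authors allude to. Both arguments use the continuity of $f$ in an essential and equivalent way, consistent with the fact that the statement fails for general bounded harmonic functions here.
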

\begin{proof}
Let $\hat x\in \sta$, introduce for all $n\in\mathbb N^*$
$$\overline{\nu}_n(\hat x)=\frac1n\sum_{k=0}^{n-1}\delta_{\hat x}\Pi^k.$$
Since $\P(\cc^d)$ is compact, the sequence of measures $\overline{\nu}_n(\hat x)$ is tight and by Prokhorov's theorem, there exists a weak limit $\overline{\nu}_\infty(\hat x)$ along a subsequence $(n_l)_l$. Since $\Pi f=f$ by assumption and $f$ is continuous, $\Pi f$ is continuous. The weak convergence implies $\lim_{l\to\infty} \overline{\nu}_{n_l}(\hat x)\Pi f=\overline{\nu}_\infty(\hat x)\Pi f$. Since $\overline{\nu}_n(\hat x)\Pi=\overline{\nu}_{n}(\hat x)-\frac{1}{n}(\delta_{\hat x}-\delta_{\hat x}\Pi^{n})$,  it follows $\overline{\nu}_\infty(\hat x)\Pi=\overline{\nu}_{\infty}(\hat x)$. Theorem~\ref{thm:uniqueness} implies $\overline{\nu}_\infty(\hat x)=\nu_{inv}$. If $f$ is harmonic, $f(\hat x)=\ee_{\overline{\nu}_n(\hat x)}(f)$ for any $n\in \nn$. If moreover $f$ is continuous, the weak convergence of $\overline{\nu}_n(\hat x)$ implies $f(\hat x)=\ee_{\overline{\nu}_\infty(\hat x)}(f)=\ee_{\nu_{inv}}(f)$. Hence $f$ is constant and the proposition is proved. 
\end{proof}

For any continuous function $g:\P(\mathbb C^d)\rightarrow \mathbb C$, let us introduce the function $g_\infty$ defined by
$$ g_{\infty}(\hat{x})=\mathbb{P}_{\hat{x}}\left( \left\{ \tfrac{1}{n} S_n(g) \xrightarrow[n \rightarrow \infty]{} \mathbb E_{\nu_{inv}}(g) \right\} \right),$$
for all $\hat x\in \P(\mathbb C^d).$
We prove that this function is constant by applying Proposition~\ref{prop:cont_harmo_const}. To this end, we need the two following lemmas which imply the continuity. Before expressing the lemmas let us introduce the maximum likelihood estimator of the initial state 
\begin{equation} \label{eq_defzn}
\hat z_{n}(\omega)=\mathop{\mathrm{argmax}}_{\hat x\in \sta}\,\|W_n x\|^2
\end{equation}
and its evolved version
\begin{equation} \label{eq_defyn}
\hat y_n=W_n\cdot\hat z_n.
\end{equation}
Proposition~3.5 in \cite{benoist2019invariant} (or Lemma~2.3 in the same reference) proves that $\hat y_n$ is a good estimator of $\hat x_n$. We use this property to replace $(\hat x_n)$ by $(\hat y_n)$ in $g_\infty$.
\begin{lem} \label{lem:g O measurable}
Let $g$ be a continuous function on $\sta$. For any $\hat x\in \sta$,
\begin{align}
    g_\infty(\hat x)=\mathbb{P}^{\pi_{\hat{x}}}\left( \left\{ \frac{1}{n} \sum_{k=0}^{n-1} g(\hat{y}_k) \xrightarrow[n \rightarrow \infty]{} \mathbb E_{\nu_{inv}}(g) \right\} \right).
\end{align}
\end{lem}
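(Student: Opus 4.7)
The plan is to swap $\hat x_k$ for $\hat y_k$ inside $g_\infty$ and then reduce the resulting event to one that is $\outalg$-measurable, so that Equation~\eqref{eq:desintegration} produces the measure $\pp^{\pi_{\hat x}}$.

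First I would observe that $\hat y_n = W_n \cdot \hat z_n$ depends only on $W_n$, hence only on $\omega = (v_1, v_2, \ldots)$. Thus each $\hat y_n$ is $\outalg_n$-measurable, and the function $\omega \mapsto \frac{1}{n}\sum_{k=0}^{n-1} g(\hat y_k(\omega))$ is $\outalg$-measurable, as is the event
\[
A := \left\{ \omega \in \out : \tfrac{1}{n}\sum_{k=0}^{n-1} g(\hat y_k(\omega)) \xrightarrow[n\to\infty]{} \ee_{\nu_{inv}}(g) \right\}.
\]
By contrast, the event defining $g_\infty(\hat x)$ a priori depends on $\hat x$ through $\hat x_k = W_k \cdot \hat x$.

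Next I would use Proposition~3.5 in \cite{benoist2019invariant}, which states that $d(\hat x_n, \hat y_n) \to 0$ under $\pp_{\hat x}$ almost surely (it is precisely the statement that $\hat y_n$ is a good estimator of $\hat x_n$). Since $\P(\cc^d)$ is compact, the continuous function $g$ is uniformly continuous, so $g(\hat x_n) - g(\hat y_n) \to 0$ $\pp_{\hat x}$-almost surely. A Cesàro averaging then gives
\[
\tfrac{1}{n}\sum_{k=0}^{n-1}\bigl(g(\hat x_k) - g(\hat y_k)\bigr) \xrightarrow[n\to\infty]{} 0 \qquad \pp_{\hat x}\text{-a.s.},
\]
so the two events
\[
\bigl\{\tfrac{1}{n}S_n(g) \to \ee_{\nu_{inv}}(g)\bigr\} \quad \text{and} \quad \bigl\{\tfrac{1}{n}\sum_{k=0}^{n-1}g(\hat y_k) \to \ee_{\nu_{inv}}(g)\bigr\}
\]
differ only on a $\pp_{\hat x}$-null set, which yields $g_\infty(\hat x) = \pp_{\hat x}(\{\hat x\} \times A)$.

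Finally I would apply the disintegration formula \eqref{eq:desintegration} with $\nu = \delta_{\hat x}$ to the $\outalg$-measurable event $A$: since $A$ does not depend on the projective coordinate, $\pp_{\hat x}(\{\hat x\}\times A) = \pp^{\pi_{\hat x}}(A)$, which is the claimed identity. The only mildly delicate step is step two, but it amounts to invoking the convergence $d(\hat x_n, \hat y_n)\to 0$ that is already established in \cite{benoist2019invariant}; everything else is uniform continuity and a direct application of the disintegration of $\pp_\nu$ onto $\outalg$.
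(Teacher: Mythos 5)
Your proposal is correct and follows essentially the same route as the paper: replace $\hat x_k$ by the $\outalg$-measurable estimator $\hat y_k$ via Proposition~3.5 of \cite{benoist2019invariant} together with uniform continuity of $g$ and Cesàro averaging, then apply the disintegration formula~\eqref{eq:desintegration} with $\nu=\delta_{\hat x}$. No gaps.
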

\begin{proof}
Proposition~3.5 in \cite{benoist2019invariant} implies
$$d(\hat x_n,\hat y_n)\xrightarrow[n \rightarrow \infty]{}0,\quad\mathbb P_{\hat x}\as$$
Since $g$ is a continuous function and $\sta$ is compact, uniform continuity implies
$$\frac 1n\sum_{k=0}^{n-1}\left(g(\hat x_k)-g(\hat y_k)\right)\xrightarrow[n \rightarrow \infty]{}0,\quad\mathbb P_{\hat x}\as$$
This way it is clear that $g_\infty$ satisfies for all $\hat x\in \sta$
$$g_{\infty}(\hat{x})=\mathbb{P}_{\hat{x}}\left( \left\{ \frac{1}{n} \sum_{k=0}^{n-1} g(\hat{y}_k) \xrightarrow[n \rightarrow \infty]{} \mathbb E_{\nu_{inv}}(g) \right\} \right).$$
The set $\left\{ \frac{1}{n} \sum_{k=0}^{n-1} g(\hat{y}_k) \xrightarrow[n \rightarrow \infty]{} \mathbb E_{\nu_{inv}}(g) \right\}$ is $\mathcal O$ mesurable then Equation~\eqref{eq:desintegration} yields the lemma by taking $\nu=\delta_{\hat x}$ and $S=\sta$.
\end{proof}

We will use the next lemma to prove the continuity of $g_\infty$.
\begin{lem}\label{dist_var_totale}
For all states $\rho,\sigma\in \mathcal D_d$, we have the following estimation in total variation distance
$$\|\mathbb P^\rho-\mathbb P^\sigma\|_{TV}:=\sup_{A \in \mathcal{O}} |\mathbb{P}^{\rho}(A) - \mathbb{P}^{\sigma}(A)|\leq\|\rho-\sigma\|_{1},$$
where $\|\cdot\|_1$ is the trace norm defined by $\|A\|_1=\tr(\sqrt{A^*A})$.
\end{lem}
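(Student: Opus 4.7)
The plan is to first establish the inequality for cylinder sets $A = O_n \in \outalg_n$ using the explicit definition \eqref{eq_defPrho}, and then to extend to the full $\sigma$-algebra $\outalg$ by a standard monotone class argument.

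For the cylinder-set case, I would start from the Jordan decomposition of the Hermitian operator $\rho-\sigma = \tau_+ - \tau_-$, where $\tau_\pm \geq 0$ have mutually orthogonal supports so that $\|\rho-\sigma\|_1 = \tr(\tau_+)+\tr(\tau_-)$. By linearity of \eqref{eq_defPrho} in the density matrix,
\[\pp^\rho(O_n) - \pp^\sigma(O_n) = \int_{O_n}\tr\bigl(W_n \tau_+ W_n^*\bigr)\d\mu\pt n(\omega) - \int_{O_n}\tr\bigl(W_n \tau_- W_n^*\bigr)\d\mu\pt n(\omega).\]
Since both integrands are non-negative, the triangle inequality gives
\[|\pp^\rho(O_n)-\pp^\sigma(O_n)| \leq \int_{O_n}\tr(W_n \tau_+ W_n^*)\d\mu\pt n + \int_{O_n}\tr(W_n \tau_- W_n^*)\d\mu\pt n.\]
Each term is dominated by the corresponding integral over all of $M_d(\cc)^n$. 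Using cyclicity of the trace and iterating the stochasticity condition \eqref{eq:stochastic family} from the innermost integral outward, one checks that $\int W_n^* W_n\,\d\mu\pt n = \id$, hence $\int \tr(W_n A W_n^*)\d\mu\pt n = \tr(A)$ for any $A \in M_d(\cc)$. Applied to $A = \tau_\pm$, this yields the bound $|\pp^\rho(O_n)-\pp^\sigma(O_n)| \leq \tr(\tau_+)+\tr(\tau_-) = \|\rho-\sigma\|_1$ on every cylinder set.

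For the extension step, I would invoke the monotone class theorem. The family $\mathcal{L}$ of sets $A\in\outalg$ satisfying $|\pp^\rho(A)-\pp^\sigma(A)|\leq \|\rho-\sigma\|_1$ contains the algebra $\bigcup_n \outalg_n$ of cylinder sets (by the previous step), is stable under complementation (because $\pp^\rho$ and $\pp^\sigma$ are probability measures, so $|\pp^\rho(A^c)-\pp^\sigma(A^c)| = |\pp^\rho(A)-\pp^\sigma(A)|$), and is stable under monotone increasing unions and decreasing intersections by continuity of finite measures. The monotone class theorem then gives $\mathcal{L}=\outalg$, which is exactly the claimed inequality.

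There is no real obstacle here: the whole statement is essentially the contractivity of completely positive trace-preserving maps under the trace norm, specialized to the iterated Kraus decomposition. The only care required is in correctly tracking the positivity of $\tau_\pm$ to justify the triangle inequality, and in verifying the monotone-class hypotheses in the extension from cylinders to $\outalg$.
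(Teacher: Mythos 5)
Your proof is correct and follows essentially the same route as the paper's: your cylinder-set estimate is the paper's inequality $|\tr(AB)|\le\tr(|A|B)$ unpacked via the Jordan decomposition $\rho-\sigma=\tau_+-\tau_-$, combined with $\int W_n^*W_n\,\d\mu^{\otimes n}=\id$. The only (immaterial) difference is in the extension from the generating algebra $\bigcup_n\outalg_n$ to $\outalg$, where the paper invokes the approximation theorem of Walters while you use a monotone class argument; both are routine.
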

\begin{proof}
Since sets $O$ such that there exists $n\in \nn$  such that $O\in \mathcal O_n$ form a generating family for $\outalg$ and since the upper bound is independent of $O$, following \cite[Theorem~0.7]{walters2000introduction}, it is sufficient to prove the inequality for such sets. Let thus $O_n\in \outalg_n$ for an arbitrary $n\in \nn$,
\begin{eqnarray*}\label{P_rho_1-lip}
    |\mathbb{P}^{\rho}(O_n)-\mathbb{P}^{\sigma}(O_n)| & =& \Big|\int_{O_n} \text{tr} \big(W_n \rho W_n^* \big) d\mu^{\otimes n} - \int_{O_n} \tr \big(W_n \sigma W_n^* \big) \d\mu^{\otimes n} \Big| \\
    & =& \Big| \tr \big( (\rho-\sigma) \int_{O_n} W_n^*W_n \d\mu^{\otimes n} \big) \Big| \\
    & \leq&\tr\Big(|\rho-\sigma|\int_{O_n} W_n^*W_n \d\mu^{\otimes n}\Big) \\
    & \leq& \|\rho -\sigma \|_1,
\end{eqnarray*}
where for the first inequality, we used that $|\tr(AB)|\leq \tr(|A|B)$ for any $B$ positive semidefinite and $A$ self adjoint with $|A|=\sqrt{A^*A}$, and for the second one, we used that $\int_{M_d(\cc)^n} W_n^*W_n\d\mu\pt{n}=\id_{\cc^d}$.
\end{proof}

\begin{prop}\label{prop:ginfiny}
The function $g_\infty$ is continuous and harmonic, hence constant. 
\end{prop}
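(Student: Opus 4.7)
The plan is to apply Proposition~\ref{prop:cont_harmo_const}: once we know $g_\infty$ is continuous and harmonic, it is constant, and its constant value is then identified by integrating against $\nu_{inv}$. So the task splits into checking continuity and harmonicity.

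For continuity, the key observation is Lemma~\ref{lem:g O measurable}: we can replace $\hat x_k$ by the $\outalg$-measurable estimator $\hat y_k$ inside the defining event, obtaining $g_\infty(\hat x)=\pp^{\pi_{\hat x}}(A)$ for the fixed event
\[
A=\Bigl\{\tfrac{1}{n}\sum_{k=0}^{n-1}g(\hat y_k)\xrightarrow[n\to\infty]{}\ee_{\nu_{inv}}(g)\Bigr\}\in \outalg.
\]
Since $A$ no longer depends on $\hat x$, Lemma~\ref{dist_var_totale} immediately gives $|g_\infty(\hat x)-g_\infty(\hat y)|\le \|\pi_{\hat x}-\pi_{\hat y}\|_1$. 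The difference of two rank-one orthogonal projectors has rank at most two, hence $\|\pi_{\hat x}-\pi_{\hat y}\|_1\le 2\|\pi_{\hat x}-\pi_{\hat y}\|_\infty=2\,d(\hat x,\hat y)$, and $g_\infty$ is in fact Lipschitz. This step is the only real content; nothing harder should be required.

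For harmonicity, I would argue that the event appearing in the definition of $g_\infty$ is shift-invariant: the averages $\frac{1}{n}\sum_{k=0}^{n-1}g(\hat x_k)$ and $\frac{1}{n}\sum_{k=1}^{n}g(\hat x_k)$ differ by $(g(\hat x_n)-g(\hat x_0))/n$, which tends to $0$ since $g$ is bounded on the compact $\P(\cc^d)$. Therefore, applying the Markov property at time $1$,
\[
\Pi g_\infty(\hat x)=\ee_{\hat x}\bigl[g_\infty(\hat x_1)\bigr]=\ee_{\hat x}\Bigl[\pp_{\hat x_1}\bigl(\tfrac{1}{n}\textstyle\sum_{k=0}^{n-1}g(\hat x_k)\to\ee_{\nu_{inv}}(g)\bigr)\Bigr]=\pp_{\hat x}\bigl(\tfrac{1}{n}\textstyle\sum_{k=1}^{n}g(\hat x_k)\to\ee_{\nu_{inv}}(g)\bigr)=g_\infty(\hat x).
\]

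Combining the two, Proposition~\ref{prop:cont_harmo_const} forces $g_\infty$ to be constant. The mildly delicate point is handling $\hat x$-dependence cleanly: doing it directly via $\pp_{\hat x}$ would mix the initial state into both the probability measure and the process, so factoring through $\pp^{\pi_{\hat x}}$ via Lemma~\ref{lem:g O measurable} is essential. Everything else is a routine application of the preparatory lemmas.
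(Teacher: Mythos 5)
Your proposal is correct and follows essentially the same route as the paper: continuity via Lemma~\ref{lem:g O measurable} combined with the total-variation bound of Lemma~\ref{dist_var_totale} and the rank-two estimate $\|\pi_{\hat x}-\pi_{\hat y}\|_1\le 2\,d(\hat x,\hat y)$, and harmonicity via the Markov property together with the vanishing of the boundary term $(g(\hat x_n)-g(\hat x_0))/n$. The paper then concludes by Proposition~\ref{prop:cont_harmo_const} exactly as you do.
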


\begin{proof}
From Proposition~\ref{prop:cont_harmo_const}, it is sufficient to prove $g_\infty$ is continuous and harmonic. Let $\hat x$ and $\hat y$ be elements of $\P(\mathbb C^d)$, using Lemma~\ref{lem:g O measurable} and the fact that $\hat y_n$ is $\outalg$-measurable,
\begin{align*}
\vert g_\infty(\hat x)&-g_\infty(\hat y)\vert
\\&=\left\vert\mathbb{P}^{\pi_{\hat{x}}}\left( \left\{ \frac{1}{n} \sum_{k=0}^{n-1} g(\hat{y}_k) \xrightarrow[n \rightarrow \infty]{} \mathbb E_{\nu_{inv}}(g) \right\} \right)-\mathbb{P}^{\pi_{\hat{y}}}\left( \left\{ \frac{1}{n} \sum_{k=0}^{n-1} g(\hat{y}_k) \xrightarrow[n \rightarrow \infty]{} \mathbb E_{\nu_{inv}}(g) \right\} \right)\right\vert\\
&\leq\|\mathbb{P}^{\pi_{\hat{x}}}-\mathbb{P}^{\pi_{\hat{y}}}\|_{TV}\\
&\leq\| \pi_{\hat{x}}-\pi_{\hat{y}}\Vert_1\\
&\leq2\,d(\hat x,\hat y).
\end{align*}
We used that for a rank $r$ matrix $A$, $\|A\|_1\leq r \|A\|_\infty$.
Hence $g_\infty$ is 2-Lipschitz, therefore continuous.

The harmonicity of $g_\infty$ is a classical result and one can follow the proof of \cite[Proposition~17.1.6]{MT}. For sake of completeness, we reproduce it here in our context. Let $\hat x\in\P(\mathbb C^d)$
\begin{align*}
\Pi g(\hat x)=&\mathbb E_{\hat x}\left (\pp_{\hat x_1}\left(\lim_{n\to\infty}\frac1n S_n(g)= \mathbb E_{\nu_{inv}}(g)\right)\right)\\
	=&\mathbb E_{\hat x}\left (\left.\pp_{\hat x}\left(\lim_{n\to\infty}\frac1n \sum_{k=0}^{n-1} g(\hat{x}_{k+1})= \mathbb E_{\nu_{inv}}(g)\right|\mathcal J_1\right)\right)\\
	=& \pp_{\hat x}\left(\lim_{n\to\infty}\left[\frac{n+1}{n}\frac1{n+1} S_{n+1}(g)-\frac1n g(\hat x_0)\right]= \mathbb E_{\nu_{inv}}(g)\right)\\
	=&g(\hat x).
\end{align*}
The passage from line 1 to line 2 uses the Markov property and the remainder uses the fact that $g$ is bounded as a continuous function on a compact space.
\end{proof}

We turn to the proof of the Law of Large Numbers.

\subsection{Proof of the Law of Large Numbers, Theorem~\ref{thm_LLN}}

The proof is split in two parts. First Theorem~\ref{thm_LLN} is proved with respect to $\pp_{\nu_{inv}}$ using Birkhoff's ergodic theorem and \cite[Proposition~3.5]{benoist2019invariant}. Then, the result is extended to any initial probability measure $\nu$ using $g_\infty$ as is done for positive Harris Markov chains in \cite[Proposition~17.1.6]{MT}.

Before we prove Theorem~\ref{thm_LLN} with respect to $\pp_{\nu_{inv}}$ we prove the invariance of this measure with respect to the shift
\begin{align*}
\Theta:\sta\times\out&\to \sta\times\out\\
    (\hat x,v_1,v_2,\dotsc)&\mapsto (v_1\cdot \hat x,v_2,v_3,\dotsc).
\end{align*}
\begin{lem}\label{lem:shift_inv}
    The probability measure $\pp_{\nu_{inv}}$ is $\Theta$-invariant.
\end{lem}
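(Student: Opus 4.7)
The plan is to verify $\pp_{\nu_{inv}}\circ \Theta^{-1}=\pp_{\nu_{inv}}$ on a $\pi$-system generating $\jointalg$, namely on cylinder sets of the form $A=S\times O_n$ with $S\in\staalg$ and $O_n\in \outalg_n$ (and, for concreteness, $O_n=B_1\times\dotsb\times B_n\times M_d(\cc)^{\mathbb N}$ with $B_i\in\mathcal M$). A standard $\pi$-$\lambda$ argument then upgrades the equality to all of $\jointalg$.

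First I would compute $\pp_{\nu_{inv}}(\Theta^{-1}A)$ directly from the definition \eqref{eq_defPnu}. By construction of $\Theta$,
\[
\Theta^{-1}A=\{(\hat x,v_1,v_2,\dotsc):\ v_1\cdot\hat x\in S,\ v_2\in B_1,\dotsc,v_{n+1}\in B_n\},
\]
so
\[
\pp_{\nu_{inv}}(\Theta^{-1}A)=\int d\nu_{inv}(\hat x)\int \mathbf 1_S(v_1\cdot\hat x)\mathbf 1_{B_1}(v_2)\dotsb\mathbf 1_{B_n}(v_{n+1})\,\|v_{n+1}\dotsb v_1 x\|^2\,d\mu^{\otimes(n+1)}.
\]
The key algebraic manipulation is the factorization $\|v_{n+1}\dotsb v_1 x\|^2=\|v_1 x\|^2\,\|v_{n+1}\dotsb v_2 y\|^2$, where $y$ is the unit representative of $v_1\cdot\hat x$ (well-defined $\mu$-a.e.\ thanks to the stochasticity condition \eqref{eq:stochastic family}). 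Inserting this and integrating first over $(v_2,\dotsc,v_{n+1})$ recognizes the inner integral as $\pp^{\pi_{v_1\cdot\hat x}}(O_n')$ with $O_n'=B_1\times\dotsb\times B_n$.

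Once this is in place, the remaining integral takes the form
\[
\pp_{\nu_{inv}}(\Theta^{-1}A)=\int d\nu_{inv}(\hat x)\int \|v_1 x\|^2\,f(v_1\cdot\hat x)\,d\mu(v_1),\qquad f(\hat u):=\mathbf 1_S(\hat u)\,\pp^{\pi_{\hat u}}(O_n').
\]
By \eqref{eq_deftranskernel}, the inner integral equals $(\Pi f)(\hat x)$, so
\[
\pp_{\nu_{inv}}(\Theta^{-1}A)=\int \Pi f\,d\nu_{inv}=\int f\,d(\nu_{inv}\Pi)=\int f\,d\nu_{inv},
\]
where we used the invariance of $\nu_{inv}$ provided by Theorem~\ref{thm:uniqueness}. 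Finally, the desintegration identity \eqref{eq:desintegration} identifies $\int f\,d\nu_{inv}=\int_S \pp^{\pi_{\hat u}}(O_n')\,d\nu_{inv}(\hat u)=\pp_{\nu_{inv}}(A)$, concluding the verification on cylinder sets.

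The main obstacle is really just bookkeeping: one has to track carefully the different layers of structure (the distinguished first coordinate $\hat x$ on $\sta$, the first outcome $v_1$ which gets absorbed into the state shift, and the tail $(v_2,v_3,\dotsc)$ which becomes the new outcome sequence) and make sure that the measurability of $v_1\cdot\hat x$ and the a.s.\ non-vanishing of $v_1 x$ cause no trouble in the factorization step. With that taken care of, the proof reduces to the identity $\nu_{inv}\Pi=\nu_{inv}$ together with the definitions, which is the content just given.
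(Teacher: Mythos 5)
Your proposal is correct and follows essentially the same route as the paper's proof: both factor the first matrix out of the likelihood via $\|v_{n+1}\dotsb v_1x\|^2=\|v_1x\|^2\|v_{n+1}\dotsb v_2 y\|^2$, recognize the resulting inner integral as the kernel $\Pi$ acting on a function of the shifted state, and conclude with the invariance $\nu_{inv}\Pi=\nu_{inv}$. The only cosmetic difference is that you test against cylinder-set indicators and close with a $\pi$-$\lambda$ argument, whereas the paper tests against continuous bounded functions.
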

\begin{proof}
    Let $f:\sta\times\out\to\rr$ be continuous and bounded. Then,
    $$\ee_{\nu_{inv}}(f\circ\Theta)=\int f(v_1\cdot \hat x,v_2,\dotsc)\d\pp^{\pi_{\hat x}}(v_1,v_2,\dotsc)\d\nu_{inv}(\hat x).$$
    By definition $\d\pp^{\pi_{\hat x}}(v_1,v_2,\dotsc)=\|v_1x\|^2\d\pp^{\pi_{v_1\cdot \hat x}}(v_2,v_3,\dotsc)\d\mu(v_1)$, thus,
    $$\ee_{\nu_{inv}}(f\circ\Theta)=\int f(v_1\cdot \hat x,v_2,\dotsc)\d\pp^{\pi_{v_1\cdot\hat x}}(v_2,v_3,\dotsc)\|v_1x\|^2\d\mu(v_1)\d\nu_{inv}(\hat x).$$
    By invariance of $\nu_{inv}$ with respect to $\Pi$,
    $$\ee_{\nu_{inv}}(f\circ\Theta)=\int f(\hat x,v_2,\dotsc)\d\pp^{\pi_{\hat x}}(v_2,\dotsc)\d\nu_{inv}(\hat x).$$
    A change of variables yields $\ee_{\nu_{inv}}(f\circ\Theta)=\ee_{\nu_{inv}}(f)$.
\end{proof}

We can now prove the LLN with respect to $\pp_{\nu_{inv}}$.
\begin{prop}\label{prop_LLNinv}
Let $g$ be a continuous function on $\P(\mathbb C^d)$. The LLN is satisfied for the invariant measure $\nu_{inv}$, that is
$$\tfrac1n S_n(g)\mathop{\longrightarrow}_{n\to\infty} \ee_{\nu_{inv}}(g),\quad\pp_{\nu_{inv}}\as$$
\end{prop}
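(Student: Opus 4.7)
The plan is to apply Birkhoff's ergodic theorem to the dynamical system $(\sta\times\out,\jointalg,\pp_{\nu_{inv}},\Theta)$, whose measure preservation was just established in Lemma \ref{lem:shift_inv}. Letting $\tilde g(\hat x,\omega):=g(\hat x)$, the observation $\tilde g\circ\Theta^k(\hat x,\omega)=g(\hat x_k)$ turns the ergodic averages of $\tilde g$ into $\frac1n S_n(g)$. Since $g$ is continuous on the compact space $\P(\cc^d)$, $\tilde g$ is bounded and certainly in $L^1(\pp_{\nu_{inv}})$, so Birkhoff yields $\pp_{\nu_{inv}}$-a.s.\ (and $L^1$) convergence of $\frac1n S_n(g)$ to some $\Theta$-invariant function $h$ satisfying $\ee_{\nu_{inv}}(h)=\ee_{\nu_{inv}}(g)$.

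The next step is to show $h$ is actually an $\outalg$-measurable function of the outcome sequence alone. Here I would invoke \cite[Proposition~3.5]{benoist2019invariant}, which gives $d(\hat x_n,\hat y_n)\to 0$ $\pp_{\nu_{inv}}$-a.s. Uniform continuity of $g$ on the compact set $\P(\cc^d)$ then forces
$$\tfrac1n\sum_{k=0}^{n-1}\bigl(g(\hat x_k)-g(\hat y_k)\bigr)\xrightarrow[n\to\infty]{}0\quad\pp_{\nu_{inv}}\as,$$
so $h=\lim_n\frac1n\sum g(\hat y_k)$ is, up to a $\pp_{\nu_{inv}}$-null set, an $\outalg$-measurable random variable.

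The concluding step is to identify $h$ with its mean. Recall that the marginal of $\pp_{\nu_{inv}}$ on $\outalg$ is $\pp^{\rho_{\nu_{inv}}}$ by \cite[Proposition~2.1]{benoist2019invariant}. Because $\Theta$ acts on the outcome coordinate exactly as the shift $\sigma:(v_1,v_2,\dotsc)\mapsto(v_2,v_3,\dotsc)$, the $\Theta$-invariance of $h$ translates directly into $\sigma$-invariance of the $\outalg$-measurable representative. Under Assumption \textbf{(Erg)} the shift on $(\out,\outalg,\pp^{\rho_{\nu_{inv}}})$ is ergodic (the classical Kümmerer--Maassen result \cite{KuMa1}), so $h$ must be $\pp^{\rho_{\nu_{inv}}}$-a.s.\ constant, hence $\pp_{\nu_{inv}}$-a.s.\ constant; combined with $\ee_{\nu_{inv}}(h)=\ee_{\nu_{inv}}(g)$ from Birkhoff, the constant must be $\ee_{\nu_{inv}}(g)$.

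The only delicate point is step three. One could alternatively avoid quoting the ergodicity of $(\sigma,\pp^{\rho_{\nu_{inv}}})$ and instead use Proposition \ref{prop:ginfiny} (constancy of $g_\infty$), but as stated that proposition only asserts that the probability of convergence to $\ee_{\nu_{inv}}(g)$ is constant in $\hat x$, not that this constant equals $1$; promoting it to value $1$ still requires some ergodicity input. Invoking \cite{KuMa1} appears cleanest and keeps the proof brief.
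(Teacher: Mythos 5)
Your proposal is correct and follows essentially the same route as the paper: Birkhoff's theorem applied to the skew shift $\Theta$ on $(\sta\times\out,\pp_{\nu_{inv}})$, replacement of $\hat x_k$ by the $\outalg$-measurable estimator $\hat y_k$ via \cite[Proposition~3.5]{benoist2019invariant} and uniform continuity, and identification of the limit as a constant via ergodicity of the outcome shift under $\pp^{\rho_{\nu_{inv}}}$ (the paper cites \cite[Proposition~3.4]{benoist2019invariant} alongside \cite{KuMa1} for this last input). No gaps.
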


\begin{proof}
This proof is an adaptation of \cite[Section~17.1]{MT}.
By definition, $S_n(g)$ can be written
$$S_n(g)=\sum_{k=0}^{n-1} g\circ\zeta\circ\Theta^k(\hat x,v_1,v_2\dotsc)$$
with for all $\hat x$ and all $v=(v_1,v_2,\ldots)$
$$\zeta(\hat x,v_1,v_2,\dotsc)=\hat x.$$
Following Lemma~\ref{lem:shift_inv}, the probability measure $\pp_{\nu_{inv}}$ is $\Theta$-invariant. Hence, Birkhoff's ergodic theorem for stationary processes implies,
\[\lim_{n\to\infty}\tfrac1nS_n(g)=X_g\quad\pp_{\nu_{inv.}}\as\]
with $X_g$ a $\Theta$ invariant random variable.

Since $g$ is continuous (and therefore uniformly continuous), the convergence $d(\hat x_n,\hat y_n)\xrightarrow[n \rightarrow \infty]{}0$ $\pp_{\nu_{inv}}$-almost surely implies
\[\lim_{n\to\infty}\frac1n\sum_{k=0}^{n-1} g(\hat y_k)=X_g \quad \pp_{\nu_{inv}}\as\]
Since the sequence $(\hat y_n)_n$ is $\mathcal O$-measurable, so is $X_g$. For $\outalg$-measurable functions, $\Theta$ is the left shift $(v_1,v_2,\dotsc)\mapsto (v_2,v_3,\dotsc)$. Since $\pp^{\rho_{\nu_{inv}}}$ is left shift ergodic as a consequence of \cite[Proposition 3.4]{benoist2019invariant} (see also \cite[Section~5]{KuMa1}), $X_g$ is $\pp^{\rho_{\nu_{inv}}}\as$ constant. Then, from the definition of $\pp^{\rho_{\nu_{inv}}}$,
\begin{equation*}
\label{eq:LLN-for-inv}
X_g=\mathbb E^{\rho_{\nu_{inv}}}(X_g)=\ee_{\nu_{inv.}}(X_g)=\ee_{\nu_{inv.}}(g),\quad \pp_{\nu_{inv.}}\as
\end{equation*}
\end{proof}

Now, we have all the ingredients to prove Theorem~\ref{thm_LLN}

\begin{proof}[Proof of Theorem~\ref{thm_LLN}] Proposition~\ref{prop_LLNinv} shows that 
$$\mathbb{P}_{\nu_{inv}}\left( \left\{ \frac{1}{n} \sum_{k=0}^{n-1} g(\hat{x}_k) \xrightarrow[n \rightarrow \infty]{} \mathbb E_{\nu_{inv}}(g) \right\} \right)=1,$$
which can be written as
$$\int_{\P(\mathbb C^d)}g_{\infty}(\hat x)d\nu_{inv}(\hat x)=1.$$
Since the integrand is no greater than one, it means that it is equal to one. For $\mathbb P_{\nu_{inv}}$-almost every $\hat x$, $g_\infty(\hat x)=1$ and since Proposition~\ref{prop:ginfiny} implies $g_\infty$ is constant, $g_\infty(\hat x)=1$ for all $\hat x\in \P(\mathbb C^d)$ and the theorem is proved.
\end{proof}

\section{Poisson equation and martingale approximation}\label{Poisson}
The remaining  limit theorems are proved using the existence of a continuous solution $\tilde{g}$ to the Poisson equation
$$\tilde g - \Pi \tilde g=\overline{g}=g-\mathbb E_{\nu_{inv}}(g)$$
We prove such a solution exists as long as $g$ is $\alpha$-Hölder for some $\alpha\in\,]0,1]$ where $1$-Hölder means Lipschitz.

Once a continuous solution $\tilde g$ is found, $S_n(\overline{g})$ can be approximated by the $(\jointalg_n)$-martingale
\begin{equation}\label{eq:def_M_n}
M_n(g):=\sum_{k=1}^{n} \tilde g(\hat x_k)-\Pi \tilde g(\hat x_{k-1}).
\end{equation}
Indeed, 
$$S_n(\overline{g})=\tilde g(\hat x_0)- \tilde g(\hat x_n) + M_n(g).$$
Since $\tilde g$ is continuous and therefore uniformly bounded, $(M_n(g))_n$ has bounded increments and
\begin{equation}\label{eq:martingale approximation}
    |S_n(\overline{g})-M_n(g)|\leq \operatorname{osc}(\tilde g)\leq 2\|\tilde g\|_\infty,    
\end{equation}
almost surely, where $\operatorname{osc}$ is the oscillation defined by $\operatorname{osc}(f)=\sup_{\hat x,\hat y} |f(\hat x)-f(\hat y)|$.

First we prove a geometric convergence for Hölder functions which will be a key ingredient in the sequel.
\begin{lem}\label{ineg_hold_lemma}
There exists $0<\lambda<1$ such that for any $g:\sta\to\cc$ Hölder continuous, there exists $C>0$ such that for any probability measure $\nu$ and all $n\in\mathbb N$ we have
\begin{equation}\label{ineg_hold}
    \left| \frac{1}{m} \sum_{r=0}^{m-1}\mathbb{E}_{\nu}[  g(\hat{x}_{mn+r})] - \mathbb{E}_{\nu_{inv}}[ g(\hat{x})]\right|\leq C\lambda^{\alpha n}
\end{equation}
where $\alpha$ is the Hölder exponent of $g$.
\end{lem}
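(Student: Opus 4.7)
The strategy is a standard reduction of the Hölder case to the Lipschitz case, at which point Theorem~\ref{thm:uniqueness} applies via Kantorovich–Rubinstein duality. Concretely, I would rewrite the left-hand side of \eqref{ineg_hold} as the integral of $g$ against the difference of the two probability measures that already appear in Theorem~\ref{thm:uniqueness},
\[
\frac{1}{m}\sum_{r=0}^{m-1}\ee_{\nu}[g(\hat x_{mn+r})] - \ee_{\nu_{inv}}[g]
= \int g \, \d\sigma_n - \int g \, \d\nu_{inv},
\]
with $\sigma_n := \frac{1}{m}\sum_{r=0}^{m-1} \nu \Pi^{mn+r}$.

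Next, let $L$ be the Hölder constant of $g$, so that $|g(\hat x)-g(\hat y)|\leq L\, d(\hat x,\hat y)^\alpha$ for all $\hat x,\hat y\in \sta$. For any coupling $P$ of $\sigma_n$ and $\nu_{inv}$,
\[
\left|\int g \, \d\sigma_n - \int g \, \d\nu_{inv}\right|
= \left|\int \bigl(g(\hat x)-g(\hat y)\bigr)\,\d P(\hat x,\hat y)\right|
\leq L\int d(\hat x,\hat y)^\alpha \, \d P(\hat x,\hat y).
\]
Since $\alpha\in\,]0,1]$, the function $t\mapsto t^\alpha$ is concave on $[0,\infty)$, so Jensen's inequality gives
\[
\int d(\hat x,\hat y)^\alpha \, \d P \leq \left(\int d(\hat x,\hat y) \, \d P\right)^\alpha.
\]
Taking the infimum over couplings $P$ and using the definition~\eqref{eq:def W_1} of $\mathcal{W}_1$, I obtain
\[
\left|\int g \, \d\sigma_n - \int g \, \d\nu_{inv}\right|
\leq L\, \mathcal{W}_1(\sigma_n,\nu_{inv})^\alpha.
\]

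Finally, Theorem~\ref{thm:uniqueness} provides constants $C_0>0$ and $0<\lambda<1$ (independent of $\nu$) such that $\mathcal{W}_1(\sigma_n,\nu_{inv})\leq C_0\lambda^n$. Combining this with the previous bound yields
\[
\left| \frac{1}{m} \sum_{r=0}^{m-1}\ee_{\nu}[g(\hat{x}_{mn+r})] - \ee_{\nu_{inv}}[g]\right|
\leq L\,C_0^\alpha\,\lambda^{\alpha n},
\]
so $C := L\,C_0^\alpha$ and the same $\lambda$ work. There is no serious obstacle: the only subtlety worth flagging is that the Hölder-to-Lipschitz reduction by Jensen necessarily degrades the exponential rate from $\lambda^n$ to $\lambda^{\alpha n}$, which is exactly what the statement records.
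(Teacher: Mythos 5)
Your proof is correct and follows essentially the same route as the paper's: passing to an arbitrary coupling, applying the Hölder bound, using Jensen's inequality for the concave map $t\mapsto t^\alpha$, taking the infimum over couplings to get $\mathcal{W}_1^\alpha$, and concluding with Theorem~\ref{thm:uniqueness}. The observation that the rate degrades from $\lambda^n$ to $\lambda^{\alpha n}$ is exactly what the statement records, as you note.
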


\begin{proof}
For a coupling $P_n$ between $\frac{1}{m} \sum_{r=0}^{m-1}\nu\Pi^{mn+r}$ and $\nu_{inv}$,
$$\frac{1}{m} \sum_{r=0}^{m-1}\mathbb{E}_{\nu}[  g(\hat{x}_{mn+r})] - \mathbb{E}_{\nu_{inv}}[ g(\hat{x})]=\ee_{P_n}(g(\hat x)-g(\hat y)).$$
Since $g$ is $\alpha$-Hölder continuous, there exists $C$ such that
$$|\ee_{P_n}(g(\hat x)-g(\hat y))|\leq C\ee_{P_n}(d(\hat x,\hat y)^\alpha).$$
Using that $x\mapsto x^\alpha$ is concave, Jensen inequality implies,
$$|\ee_{P_n}(g(\hat x)-g(\hat y))|\leq C\left(\ee_{P_n}(d(\hat x,\hat y))\right)^\alpha.$$
The coupling $P_n$ being arbitrary we can take an infimum over it, so that
$$\left|\frac{1}{m} \sum_{r=0}^{m-1}\mathbb{E}_{\nu}[  g(\hat{x}_{mn+r})] - \mathbb{E}_{\nu_{inv}}[ g(\hat{x})]\right|\leq CW_1^\alpha\left(\frac{1}{m} \sum_{r=0}^{m-1}\nu\Pi^{mn+r},\nu_{inv}\right).$$
Then Theorem~\ref{thm:uniqueness} yields the lemma.
\end{proof}

A major upside in the above lemma is the uniformity of the bound with respect to the initial measure. We often exploit this property in the sequel and in particular in the following proof of existence of a continuous solution to the Poisson equation~\eqref{eq:Poisson eq}. 

\begin{prop}\label{theorem_exist_sol_eq_Poisson}
Let $g:\sta\to\cc$ be Hölder continuous. If the assumptions \textbf{(Erg)} and \textbf{(Pur)} hold, then the Poisson equation 
\begin{equation}\label{eq:poisson1}
(\id-\Pi)\tilde g=\bar g,
\end{equation}
admits a continuous solution $\tilde g$. Two continuous solutions of the Poisson equation~\eqref{eq:poisson1} are equal up to an additive constant.

As a consequence the martingale approximation of Equation~\eqref{eq:martingale approximation} holds. That is,
\begin{equation}
    \sup_{n}|S_n(\overline{g})-M_n(g)|\leq \operatorname{osc}(\tilde g)\leq 2\|\tilde g\|_\infty  
\end{equation}
almost surely.
\end{prop}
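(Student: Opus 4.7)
Uniqueness follows immediately from Proposition~\ref{prop:cont_harmo_const}: two continuous solutions $\tilde g_1,\tilde g_2$ of~\eqref{eq:poisson1} differ by a continuous $\Pi$-harmonic function, hence by a constant. For existence, the central difficulty, which I expect to be the main obstacle, is the possibly non-trivial period $m$ appearing in Theorem~\ref{thm:uniqueness}: when $m>1$, individual iterates $\Pi^n\overline g$ need not go to zero (only their averages over one period do), so the classical Neumann series $\sum_n \Pi^n\overline g$ may genuinely fail to converge in sup norm.

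My plan is to sidestep this by decomposing $\overline g = T\overline g + (\overline g - T\overline g)$ using the Cesàro operator $T:=\frac{1}{m}\sum_{r=0}^{m-1}\Pi^r$, which commutes with $\Pi$, and to produce a continuous Poisson solution for each piece. The residual piece is a finite, honestly telescoping object: $\overline g - T\overline g = \frac{1}{m}\sum_{r=0}^{m-1}(\id - \Pi^r)\overline g = (\id - \Pi)\tilde h$ with the explicit continuous function $\tilde h := \frac{1}{m}\sum_{r=0}^{m-1}\sum_{s=0}^{r-1}\Pi^s\overline g$. For the averaged piece $T\overline g$, I restate Lemma~\ref{ineg_hold_lemma} as $\|T\Pi^{mk}\overline g\|_\infty \le C\lambda^{\alpha k}$; combined with the identity $T\Pi^{mk+s} = \Pi^s T\Pi^{mk}$ and the $L^\infty$-contractivity $\|\Pi\|_{\infty\to\infty}\le 1$ of the Markov operator, this yields $\|T\Pi^n\overline g\|_\infty \le C\lambda^{\alpha\lfloor n/m\rfloor}$. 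Consequently the series $\tilde\tau := \sum_{n=0}^\infty T\Pi^n\overline g$ converges uniformly; each partial sum is continuous by the Feller property (Proposition~\ref{prop:feller}), so $\tilde\tau$ is continuous. A telescoping argument, justified by the uniform convergence of the partial sums together with $\lim_N T\Pi^N\overline g = 0$, then gives $(\id - \Pi)\tilde\tau = T\overline g$. Setting $\tilde g := \tilde\tau + \tilde h$ produces a continuous function satisfying $(\id - \Pi)\tilde g = \overline g$.

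The martingale approximation is then immediate. By the definition~\eqref{eq:def_M_n} of $M_n(g)$ together with the Poisson equation, writing $\tilde g(\hat x_k) - \Pi\tilde g(\hat x_{k-1}) = \tilde g(\hat x_k) - \tilde g(\hat x_{k-1}) + \overline g(\hat x_{k-1})$ and summing from $k=1$ to $n$ yields the identity $S_n(\overline g) = \tilde g(\hat x_0) - \tilde g(\hat x_n) + M_n(g)$, from which the pathwise bound $|S_n(\overline g) - M_n(g)| \le \operatorname{osc}(\tilde g) \le 2\|\tilde g\|_\infty$ follows at once.
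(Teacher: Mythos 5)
Your proof is correct, and it rests on exactly the same pillars as the paper's: uniqueness via Proposition~\ref{prop:cont_harmo_const}, existence via a Neumann-type series whose convergence is controlled by the period-averaged geometric decay of Lemma~\ref{ineg_hold_lemma}, continuity via the Feller property (Proposition~\ref{prop:feller}), and the standard telescoping identity $S_n(\overline g)=\tilde g(\hat x_0)-\tilde g(\hat x_n)+M_n(g)$ for the martingale bound. The only real difference is how the period $m$ is bookkept. The paper keeps the full series $\sum_k\Pi^k\overline g$ and shows that its truncations $f_n^{(r)}=\sum_{k=0}^{nm+r-1}\Pi^k\overline g$ are uniformly Cauchy, the blocks of length $m$ being exactly the Cesàro averages to which Lemma~\ref{ineg_hold_lemma} applies; the final solution is the average over $r$ of the limits. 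You instead split $\overline g=T\overline g+(\id-\Pi)\tilde h$ with $T=\frac1m\sum_{r=0}^{m-1}\Pi^r$, solve the residual piece exactly by the finite coboundary $\tilde h$, and sum a genuinely convergent Neumann series for $T\overline g$ using $\|T\Pi^n\overline g\|_\infty\le C\lambda^{\alpha\lfloor n/m\rfloor}$ (which correctly combines Lemma~\ref{ineg_hold_lemma} with the contractivity $\|\Pi\|_{\infty\to\infty}\le1$). Your reorganization isolates the role of the period a bit more transparently, at the cost of introducing the auxiliary operator $T$ and the extra corrector $\tilde h$; the two constructions are otherwise equivalent in content and in the estimates they require, and both yield the stated conclusion.
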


\begin{proof}
Without loss of generality, we assume $g=\bar g$. Let $$f_n^{(r)}=\sum_{k=0}^{nm+r-1}\Pi^k g,$$ for $r \in \{0,...,m-1\}$. Proposition~\ref{prop:feller} implies $\Pi$ preserves the continuity, hence $f_n^{(r)}$ is continuous for all $n \in\nn$ and $r \in \{0,...,m-1\}$.

For all $\hat x$ and $k\geq 0$, denote $\nu_{k,\hat x}=\delta_{\hat x}\Pi^k$, then for  all $p\leq q$ and all $\hat x\in \P(\mathbb C^d)$
\begin{equation} \label{Cauchy_fin}
    \begin{split}
    \left|f_p^{(r)}(\hat{x}) - f_q^{(r)}(\hat{x})\right|&=\Big|\sum_{k=pm+r}^{qm+r-1} \Pi^k g(\hat{x})\Big| = \Big|\sum_{k=pm}^{qm-1} \Pi^k \Big( \Pi^r g(\hat{x})\Big)\Big|\\&\leq m\left\vert\sum_{k=p}^{q-1}\frac{1}{m} \sum_{l=0}^{m-1}\Pi^r\Pi^{km+l}g(\hat x)\right\vert = m\left|\sum_{k=p}^{q-1}\frac1m\sum_{l=0}^{m-1}\nu_{r,\hat x}\Pi^{km+l}g\right|\\
    & \leq m \sum_{k=p}^{q-1} C\lambda^{\alpha k} \leq \frac{mC}{1-\lambda^\alpha}\lambda^{\alpha p},
    \end{split}
\end{equation}
where the constants $C$ and $\lambda$ are the ones from Lemma~\ref{ineg_hold_lemma} and are thus uniform in $\nu_{r,\hat x}$

It follows the sequence $\{f_p^{(r)}\}_{p \geq 0}$ is uniformly Cauchy and converges therefore uniformly towards a continuous function $f_{\infty}^{(r)}$. The equality $(\id-\Pi)f_n^{(r)} = g - \Pi^{nm+r}g$ and Lemma~\ref{ineg_hold_lemma} imply the uniform convergence
\begin{equation} \label{CU1}
    \begin{split}
    \lim_{n\to\infty}(\id-\Pi)\Big(\frac{1}{m}\sum_{r=0}^{m-1}f_n^{(r)}\Big)  & =g-\lim_{n\to\infty}\frac{1}{m}\sum_{r=0}^{m-1}\Pi^{nm+r}g =g.
    \end{split}
\end{equation}
Since $(\id - \Pi)$ is a bounded operator with respect to the uniform norm,
$$(\id-\Pi)\left( \frac1m\sum_{r=0}^{m-1} f_\infty^{(r)}\right)=g.$$
Hence, $\tilde g=\frac1m\sum_{r=0}^{m-1} f_\infty^{(r)}$ is a solution to Poisson equation~\eqref{eq:poisson1}. As a finite sum of continuous functions, it is also continuous.

In order to show uniqueness up to an additive constant, let $\tilde{g}_1$ and $\tilde{g}_2$ be two continuous solutions of Equation~\eqref{eq:Poisson eq}, then $\tilde{g}_1-\tilde{g}_2$ satisfies $\Pi (\tilde{g}_1-\tilde g_2)=\tilde g_1-\tilde g_2$. Since $\tilde{g}_1-\tilde g_2$ is continuous and harmonic, it is constant by Proposition~\ref{prop:cont_harmo_const}.
\end{proof}

In the remaining sections we state some limit theorems that are consequences of the martingale approximation of Equation~\eqref{eq:martingale approximation}.

\section{Central Limit Theorem}\label{sec:CLT}
This section is devoted to the proof of a FCLT. We define a version of $S_n$ as a continuous function interpolating between the partial sums $S_{\lfloor nt\rfloor}$ for $t\in [0,1]$. For any Hölder continuous function $g$ and $t\in [0,1]$ let
\begin{equation}\label{eq:def_s_n}
         s_n(t) := S_{\lfloor nt \rfloor}(g) + (nt-\lfloor nt \rfloor)(S_{\lfloor nt \rfloor +1}(g) - S_{\lfloor nt\rfloor}(g)).
\end{equation}
We recall that $\gamma_g^2=\ee_{\nu_{inv}}(\tilde g^2-(\Pi\tilde g)^2)$ with $\tilde g$ a solution to the Poisson equation~\eqref{eq:Poisson eq}.

\begin{thm}\label{pre_TCL}
Let $g:\sta\to\cc$ be Hölder continuous and such that $\ee_{\nu_{inv}}(g)=0$. If the assumptions \textbf{(Erg)} and \textbf{(Pur)} hold, 
\begin{enumerate}[label=(\arabic*)]
    \item If  $\gamma_g^2 > 0 $, for any initial measure $\nu$,
$$\left(\frac{s_n(t)}{\sqrt{n \gamma_g^2}}\right)_{t \in[0,1]} \xrightarrow[n \rightarrow \infty]{\mathcal{L}} B\equiv (B_t)_{t\in [0,1]},$$
where $B$ is a unidimensional Brownian motion and the convergence is meant for the process.\label{it:FCLT}
    \item If $\gamma_g^2=0$, then 
    $$\sup_{t\in [0,1]}|s_n(t)|\leq 6\operatorname{osc}(\tilde g),\quad \pp_{\nu_{inv}}\as$$
    and for any measure $\nu$, for any $t\in [0,1]$,
    $$\lim_{n\to\infty}\left\|\frac{s_n(t)}{\sqrt{n}}\right\|_{L^2(\pp_\nu)} = 0. $$\label{it:0CLT}
\end{enumerate}
\end{thm}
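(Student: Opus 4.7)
The plan is to build on the martingale approximation from Proposition~\ref{theorem_exist_sol_eq_Poisson}. Writing $\Delta_k:=\tilde g(\hat x_k)-\Pi\tilde g(\hat x_{k-1})$ so that $M_n(g)=\sum_{k=1}^n\Delta_k$, the identity $S_n(\bar g)=\tilde g(\hat x_0)-\tilde g(\hat x_n)+M_n(g)$ together with $|g|\le\operatorname{osc}(\tilde g)$ (since $g=(\id-\Pi)\tilde g$ and $\Pi\tilde g(\hat x)$ is a convex average of values of $\tilde g$) yields
\begin{equation*}
\sup_{t\in[0,1]}\bigl|s_n(t)-M_{\lfloor nt\rfloor}(g)\bigr|\le 3\operatorname{osc}(\tilde g)\qquad\text{a.s.}
\end{equation*}
After dividing by $\sqrt n$ this error vanishes uniformly in $t$, so the problem reduces to the distributional limit of $t\mapsto M_{\lfloor nt\rfloor}/\sqrt n$.

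For part~(1), I would invoke the martingale FCLT (Theorem~4.1 of Hall and Heyde) applied to the triangular array $X_{n,k}:=\Delta_k/\sqrt n$, $k=1,\ldots,n$, of $(\mathcal J_k)$-martingale differences. Boundedness of $\tilde g$ gives $|X_{n,k}|\le C/\sqrt n$, so the conditional Lindeberg condition is automatic. The decisive point is the quadratic variation: an explicit computation yields $\ee[X_{n,k}^2\mid\mathcal J_{k-1}]=h(\hat x_{k-1})/n$ with $h:=\Pi\tilde g^2-(\Pi\tilde g)^2$. Since $\Pi$ is Feller (Proposition~\ref{prop:feller}) and $\tilde g$ is continuous (Proposition~\ref{theorem_exist_sol_eq_Poisson}), $h$ is continuous, and Theorem~\ref{thm_LLN} applied to $h$ gives, $\pp_\nu$-a.s., $\tfrac{1}{n}\sum_{k=1}^{\lfloor nt\rfloor}h(\hat x_{k-1})\to t\,\ee_{\nu_{inv}}(h)$. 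Invariance of $\nu_{inv}$ under $\Pi$ yields $\ee_{\nu_{inv}}(\Pi\tilde g^2)=\ee_{\nu_{inv}}(\tilde g^2)$, so $\ee_{\nu_{inv}}(h)=\gamma_g^2$. Hall--Heyde then produces Skorokhod convergence of $M_{\lfloor n\cdot\rfloor}/\sqrt n$ to $\gamma_g B$; continuity of the limit combined with the bounded discrepancy above upgrades this to the sup-norm convergence of $s_n/\sqrt{n\gamma_g^2}$ to $B$, uniformly over the initial law $\nu$ thanks to the $\pp_\nu$-uniformity of Theorem~\ref{thm_LLN}.

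For part~(2), when $\gamma_g^2=0$ the non-negative continuous function $h$ integrates to $0$ against $\nu_{inv}$, hence vanishes on $\supp\nu_{inv}$. Stationarity of the chain under $\pp_{\nu_{inv}}$ forces $\hat x_k\in\supp\nu_{inv}$ for every $k$ almost surely, so each $\Delta_k$ has zero conditional second moment, and therefore vanishes a.s.; this gives $M_n(g)\equiv 0$ $\pp_{\nu_{inv}}$-a.s., whence $|s_n(t)|\le |\tilde g(\hat x_0)|+|\tilde g(\hat x_{\lfloor nt\rfloor})|+|g(\hat x_{\lfloor nt\rfloor})|$ is uniformly bounded (the factor $6$ in the statement is loose). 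For the $L^2$ statement under a general $\pp_\nu$, orthogonality of martingale increments gives $\ee_\nu[M_n(g)^2]=\sum_{k=1}^n\ee_\nu[h(\hat x_{k-1})]$; Theorem~\ref{thm_LLN} for the bounded continuous function $h$, combined with dominated convergence, yields $n^{-1}\ee_\nu[M_n(g)^2]\to 0$, and the remaining contributions to $s_n(t)/\sqrt n$ are bounded in $L^\infty$, delivering the claim.

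The main obstacle I anticipate is the quadratic variation step of part~(1): its validity hinges on Theorem~\ref{thm_LLN} being applicable to $h$, which requires $h$ to be continuous. That in turn uses the continuity of $\tilde g$ (rather than mere boundedness) supplied by Proposition~\ref{theorem_exist_sol_eq_Poisson} together with the Feller property of $\Pi$; without these ingredients the LLN need not apply in this singular, non-$\nu_{inv}$-irreducible setting and the whole martingale FCLT route would collapse.
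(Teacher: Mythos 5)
Your proposal is correct and follows essentially the same route as the paper: martingale approximation via the Poisson equation, the Hall--Heyde martingale FCLT with the Lindeberg condition killed by bounded increments, and the LLN (Theorem~\ref{thm_LLN}) applied to the continuous function $h=\Pi\tilde g^2-(\Pi\tilde g)^2$ for the quadratic variation, followed by Slutsky. Your part~(2) differs only cosmetically (you locate $\supp\nu_{inv}$ where $h$ vanishes and use the a.s.\ LLN plus dominated convergence, where the paper computes $\ee(M_n^2)$ directly via invariance and Wasserstein convergence), and for the uniform bound you should keep the increments in difference form, $|\tilde g(\hat x_0)-\tilde g(\hat x_{\lfloor nt\rfloor})|\le\operatorname{osc}(\tilde g)$, rather than summing absolute values.
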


\begin{proof}
First, assume $\gamma_g^2 > 0$. We apply the Central Limit Theorem for martingales. For $t\in [0,1]$ and any $n$, let
$$m_n(t):=M_{\lfloor nt\rfloor}(g)+(nt-\lfloor nt\rfloor)(M_{\lfloor nt\rfloor +1}(g)-M_{\lfloor nt\rfloor}(g)).$$
We first show $(m_n(t))_t$ converges to a Brownian motion. We prove the following two properties:
\begin{enumerate}[label=\alph*)]
    \item $\pp_\nu$ almost surely
    $$\frac{1}{n} \sum_{k=1}^n \mathbb{E}[(M_k(g)-M_{k-1}(g))^2|\jointalg_{k-1}] \xrightarrow[n \rightarrow \infty]{}\gamma^2_g.$$\label{it:lln_variance}
    \item For all $\varepsilon>0$, $\pp_\nu$ almost surely
    $$\frac{1}{n} \sum_{k=1}^n \mathbb{E}\Big[\Big(M_k(g)-M_{k-1}(g)\Big)^2 \mathds{1}_{\{(M_k(g)-M_{k-1}(g))^2 \geq \varepsilon n\}} | \jointalg_{k-1}\Big]\xrightarrow[n \rightarrow \infty]{}0.$$\label{it:tight_clt_martingale}
\end{enumerate}

Item~\ref{it:tight_clt_martingale} follows directly from $\sup_k\|M_k(g)-M_{k-1}(g)\|_\infty\leq 2\|\tilde g\|_\infty$. Indeed that implies the terms in the sum on the left hand side vanish for $n$ large enough. Hence the left hand side is null for $n\geq 4\|\tilde g\|_\infty^2/\varepsilon$.

Item~\ref{it:lln_variance} follows from the LLN. For any $k\in \nn$,
\begin{equation}\label{conv_gamma_g}
        \begin{split}
         \mathbb{E}[(M_k(g)-M_{k-1}(g))^2|\jointalg_{k-1}]& = \mathbb{E}[(\tilde g(\hat{x}_k)-\Pi \tilde g(\hat{x}_{k-1}))^2|\jointalg_{k-1}] \\
        & =\Big( \Pi {\tilde g}^2(\hat{x}_{k-1}) - (\Pi \tilde g(\hat{x}_{k-1}))^2\Big)\\& = h(\hat{x}_{k-1}),
        \end{split}
    \end{equation}
    where $h(\cdot) = \Pi \tilde g^2(\cdot) - (\Pi \tilde g(\cdot))^2$. Since $\Pi$ is Feller (see Proposition~\ref{prop:feller}) and $\tilde g$ is continuous, $h$ is a continuous function. Then, Theorem~\ref{thm_LLN} and $\gamma_g^2=\mathbb E_{\nu_{inv}}(h)$ yield Item~\ref{it:lln_variance}.
    
Theorem~4.1 in \cite{HallHeyde} implies
$$\left(\frac{m_n(t)}{\sqrt{n \gamma_g^2}}\right)_{t \geq 0} \xrightarrow[n \to \infty]{\mathcal{L}} B.$$
We turn to $(s_n(t))_t$. Proposition \ref{theorem_exist_sol_eq_Poisson} implies
\begin{equation}
        \sup_{t\in[0,1],n\in \nn}|m_n(t)-s_n(t)| \leq 6\operatorname{osc}(\tilde g).
\end{equation}
 Thus,
    \begin{equation}\label{s_n_et_m_n}
        \sup_{t\in [0,1]}\left| \frac{s_n(t)-m_n(t)}{\sqrt{n\gamma_g^2}}\right|  \xrightarrow[n \rightarrow \infty]{} 0,\quad \pp_\nu\as
\end{equation}
Then Slutsky's lemma implies Item~\ref{it:FCLT}.

   \medskip
Second, assume $\gamma_g^2=0$. By the martingale property, $\ee_{\nu_{inv}}(M_{\lfloor n t\rfloor}^2(g))=\sum_{k=1}^{\lfloor nt\rfloor}\ee_{\nu_{inv}}((\tilde g(\hat x_k)-\Pi \tilde g(\hat x_{k-1}))^2)$. Taking a conditional expectation with respect to $\jointalg_{k-1}$ inside the expectation leads to
$$\ee_{\nu_{inv}}(M_{\lfloor nt\rfloor}^2(g))=\sum_{k=1}^{\lfloor nt\rfloor} \ee_{\nu_{inv}}\left(h(\hat x_{k-1})\right).$$ 
Invariance of $\nu_{inv}$ implies,
$$\ee_{\nu_{inv}}(M_{\lfloor nt\rfloor}^2(g))=\lfloor nt\rfloor\gamma_g^2=0.$$
Hence, $M_{\lfloor nt\rfloor}(g)=0$ $\pp_{\nu_{inv}}$-almost surely. Similarly, $M_{\lfloor nt\rfloor+1}(g)=0$ $\pp_{\nu_{inv}}$-almost surely. Then,  Proposition \ref{theorem_exist_sol_eq_Poisson} yields the first part of Item~\ref{it:0CLT}.

For an arbitrary measure $\nu$ over $\sta$, the martingale property implies 
$$\ee_\nu(M_{\lfloor nt\rfloor}^2(g))=\sum_{k=1}^{\lfloor nt\rfloor}\nu(\Pi^{k-1}h).$$
Then, Theorem~\ref{thm:uniqueness} and Proposition~\ref{prop:feller} imply $\frac{1}{\lfloor nt\rfloor}\ee_{\nu}(M_{\lfloor nt\rfloor}^2(g))\xrightarrow[n\to\infty]{}\gamma_g^2=0$. Similarly, $\frac{1}{\lfloor nt\rfloor+1}\ee_{\nu}(M_{\lfloor nt\rfloor+1}^2(g))\xrightarrow[n\to\infty]{}\gamma_g^2=0$.  Then Proposition \ref{theorem_exist_sol_eq_Poisson} yields the $L^2$-norm convergence of the second part of Item~\ref{it:0CLT}.
\end{proof}

\begin{rem}
The convergence in $L^2$-norm when $\gamma_g^2=0$ should also hold $\pp_\nu$ almost surely. However, such a result requires a finer analysis of the function $h$. In \cite{Hautecoeur}, $h$ is shown to be Hölder continuous. That implies $\sup_n \ee_\nu(M_n^2(g))<\infty$ by Lemma~\ref{ineg_hold_lemma}. Then the almost sure convergence follows from Doob's martingale convergence theorem as in the proof of \cite[Theorem~17.1.7]{MT}.
\end{rem}

\begin{rem}
The case $\gamma_g^2=0$ is somewhat trivial. We therefore concentrate on the situation where $\gamma_g^2>0$.
\end{rem}

\section{Law of Iterated Logarithm}\label{LILL}

This section is devoted to the the Law of Iterated Logarithm. It shows that at only a slightly faster speed ($\log \log n$) the weak convergence of the CLT is lost.

\begin{thm}
Let $g:\sta\to\cc$ be Hölder continuous and such that $\ee_{\nu_{inv}}(g)=0$. Assume \textbf{(Erg)} and \textbf{(Pur)} hold. Let $\tilde g$ be a continuous solution to the Poisson equation~\eqref{eq:Poisson eq} and  $\gamma_g^2=\ee_{\nu_{inv}}(\tilde g^2 -(\Pi \tilde g)^2)$. Assume $\gamma_g^2 > 0$. Then for any initial measure $\nu$, $$\limsup\limits_{n \rightarrow \infty}\frac{\pm S_n(g)}{\sqrt{2n\gamma_g^2 \log\log(n)}} = 1 \;\; \mathbb P_{\nu} \as$$
\end{thm}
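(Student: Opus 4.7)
The plan is to reduce the LIL for $S_n(g)$ to a martingale LIL for $M_n(g)$ and then apply the version due to Stout \cite{stout1970martingale}. The main work has essentially been done in the previous sections.

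First, Proposition~\ref{theorem_exist_sol_eq_Poisson} provides a continuous solution $\tilde g$ of the Poisson equation and the pointwise bound
\[
\bigl|S_n(g)-M_n(g)\bigr|\leq \operatorname{osc}(\tilde g)\leq 2\|\tilde g\|_\infty
\]
that holds uniformly in $n$, $\pp_\nu$-almost surely. Dividing by $\sqrt{2n\gamma_g^2\log\log n}$ and letting $n\to\infty$, this error term vanishes almost surely, so it is enough to show
\[
\limsup_{n\to\infty}\frac{\pm M_n(g)}{\sqrt{2n\gamma_g^2 \log\log n}}=1\quad \pp_\nu\as
\]
This is the LIL for the $(\jointalg_n)$-martingale $M_n(g)$ defined in \eqref{eq:def_M_n}, which has uniformly bounded increments of modulus at most $2\|\tilde g\|_\infty$.

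Second, I would verify the hypotheses of Stout's martingale LIL \cite[Theorem~3]{stout1970martingale}. Stout requires a martingale difference sequence with, essentially, (i) a strong-law-type convergence of the conditional variances and (ii) a conditional Lindeberg condition; under these, $\limsup M_n/\sqrt{2 s_n^2 \log\log s_n^2}=1$ a.s., where $s_n^2=\sum_{k=1}^n\ee[(M_k(g)-M_{k-1}(g))^2\mid\jointalg_{k-1}]$. Both conditions have already been established in the proof of Theorem~\ref{pre_TCL}: item (a) there gives $s_n^2/n\to\gamma_g^2$ $\pp_\nu$-almost surely (using that $h=\Pi\tilde g^2-(\Pi\tilde g)^2$ is continuous and the LLN of Theorem~\ref{thm_LLN}), while item (b) gives the Lindeberg condition trivially from boundedness of the increments. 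The assumption $\gamma_g^2>0$ ensures $s_n^2\to\infty$ almost surely, so $\log\log s_n^2 \sim \log\log(n\gamma_g^2)\sim \log\log n$, and Stout's conclusion rewrites as the desired LIL with normalization $\sqrt{2n\gamma_g^2\log\log n}$.

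Third, I would combine the two pieces: since $S_n(g)=M_n(g)+(\tilde g(\hat x_0)-\tilde g(\hat x_n))$ with the bracketed term uniformly bounded, the limsup is unchanged when $M_n(g)$ is replaced by $S_n(g)$. This gives the statement for every initial measure $\nu$, because items (a) and (b) of Theorem~\ref{pre_TCL} hold under any $\pp_\nu$ (the LLN applies to continuous functions regardless of the initial distribution).

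The only step that could cause friction is checking that Stout's hypotheses are met in the form stated in \cite{stout1970martingale}, since that paper's conditions are formulated in terms of the random normalization $s_n^2$ rather than $n\gamma_g^2$; but because $s_n^2/(n\gamma_g^2)\to 1$ almost surely and $\gamma_g^2>0$, the two normalizations are interchangeable in the limsup. Beyond that, the argument is a direct application of previously proved ingredients: the martingale approximation of Section~\ref{Poisson} and the a.s.\ convergence of conditional variances already used in Section~\ref{sec:CLT}.
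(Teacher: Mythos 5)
Your proposal is correct and follows essentially the same route as the paper: reduce to the martingale $M_n(g)$ via the uniformly bounded error from Proposition~\ref{theorem_exist_sol_eq_Poisson}, verify Stout's hypotheses using the bounded increments and the almost sure convergence $s_n^2/n\to\gamma_g^2$ already established from the LLN in the proof of Theorem~\ref{pre_TCL}, and then exchange the random normalization $s_n u_n$ for $\sqrt{2n\gamma_g^2\log\log n}$. The only cosmetic difference is the precise pointer into \cite{stout1970martingale} (the paper invokes the theorem of the introduction together with the remark closing Section~3 for the bounded-increment case), which does not affect the argument.
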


\begin{proof}
As for the FCLT, Proposition \ref{theorem_exist_sol_eq_Poisson} allows us to prove the statement only for $M_n(g)$ instead of $S_n(g)$. We apply the theorem of \cite{stout1970martingale}. 

Let 
$$s_n^2:=\sum_{k=1}^n \mathbb{E}_\nu[(M_k(g)-M_{k-1}(g))^2|\jointalg_{k-1}]$$
and 
$$u_n=\sqrt{2 \log\log(s_n^2)}.$$
Following~\eqref{conv_gamma_g}, since $\Pi$ is Feller according to Proposition~\ref{prop:feller}, $h$ is continuous and Theorem~\ref{thm_LLN} implies $\frac1n s_n^2\xrightarrow[n\to\infty]{}\gamma_g^2,\,\, \pp_\nu\as$. Hence,
$$s_n^2\xrightarrow[n\to\infty]{}+\infty,\,\,\pp_\nu\as$$
Then, since almost surely,
$$|M_n(g)-M_{n-1}(g)|\leq 2\|\tilde g\|_\infty,$$
following the remark at the end of \cite[Section 3]{stout1970martingale}, the theorem in the introduction of the same reference applies and
$$\limsup_{n\to\infty}\frac{\pm M_n(g)}{s_n u_n}=1,\quad \pp_\nu\as$$
Since $s_n^2/n\xrightarrow[n\to\infty]{}\gamma_g^2,\,\,\pp_\nu\as$, we have the almost sure equivalences $s_n\sim \sqrt{n\gamma_g^2}$ and $u_n\sim \sqrt{2\log\log n}$ as $n$ grows. These equivalences yield the theorem.
\end{proof}

\section{Moderate Deviation Principle}\label{MDPP}
Our final limit theorem concerns the probability of deviation from the law of large numbers for rates in $n$ faster that the $\sqrt{n}$ of the CLT and slower than the linear one of the LLN. The Moderate Deviation Principle shows that at any of these rates the probability of deviation is essentially gaussian with variance $\gamma_g^2$.
\begin{thm}\label{thm_moderate}
Let $g:\sta\to\cc$ be Hölder continuous and such that $\ee_{\nu_{inv}}(g)=0$. Let $t\mapsto a(t), t>0$ be a non negative function such that 
$$\lim \limits_{t \rightarrow \infty} \frac{t}{a(t)} = \lim \limits_{t \rightarrow \infty} \frac{a(t)}{\sqrt{t}} = +\infty.$$
Then, if the assumptions \textbf{(Erg)} and \textbf{(Pur)} hold, 
\begin{enumerate}[label=(\arabic*)]
    \item For all $z \in \mathbb R$, $$\lim \limits_{n \rightarrow \infty} \sup \limits_{\hat{x} \in P(\cc^d)} \Big| \frac{n}{a^2(n)} \log \Big( \mathbb{E}_{\hat{x}} \Big[ \exp{\frac{a(n)}{n} z S_n(g)} \Big] \Big) -\frac{z^2 \gamma_g^2}{2} \Big| = 0.$$\label{it:cumulant MDP}
    \item For any Borel subset $B$ of $\rr$:
    $$\limsup \limits_{n \rightarrow \infty} \frac{n}{a^2(n)} \log \Big( \sup \limits_{\hat{x} \in P(\cc^d)} \mathbb{P}_{\hat{x}} \Big( \tfrac{1}{a(n)} S_n(g) \in B \Big) \Big)\leq -\inf_{y \in \Bar{B}} J(y)$$
    $$\liminf \limits_{n \rightarrow \infty} \frac{n}{a^2(n)} \log \Big( \inf \limits_{\hat{x} \in P(\cc^d)} \mathbb{P}_{\hat{x}} \Big( \tfrac{1}{a(n)} S_n(g) \in B \Big) \Big) \geq -\inf_{y \in B^{\circ}} J(y)$$
    where $\bar B$ denotes the closure of $B$ and $B^{\circ}$ its interior and
    $$J(y) = \sup \limits_{v \in \mathbb R} \big( yv - \tfrac12\gamma_g^2 v^2 \big) = \begin{cases}\frac{y^2}{2\gamma_g^2} &\text{if }\gamma_g^2>0\\
    0& \text{if }\gamma_g^2=0\text{ and }y=0\\
    +\infty&\text{if }\gamma_g^2=0\text{ and }y\neq 0
    \end{cases}.$$
\end{enumerate}
\end{thm}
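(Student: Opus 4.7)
The plan is to derive the theorem from the martingale moderate deviation principle of \cite[Theorem~1.1]{Ga96} applied to the martingale $M_n(g)$, then transfer the result to $S_n(g)$ via the approximation of Proposition~\ref{theorem_exist_sol_eq_Poisson}. The transfer step is straightforward: since $|S_n(g)-M_n(g)|\leq 2\|\tilde g\|_\infty$ almost surely and $a(n)/n\to 0$, the contribution of the difference to $\frac{n}{a(n)^2}\log \ee_{\hat x}[\exp(\tfrac{a(n)}{n}z(S_n-M_n))]$ is bounded uniformly in $\hat x$ by $\tfrac{2|z|\|\tilde g\|_\infty}{a(n)}\to 0$, and the sets $\{S_n/a(n)\in B\}$ and $\{M_n/a(n)\in B\}$ differ only on events that shift the threshold by $2\|\tilde g\|_\infty/a(n)$, a perturbation that disappears in the MDP upper/lower bounds once the usual open/closed set limits are taken. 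It therefore suffices to prove the two items for $M_n(g)$.

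Next I would verify the hypotheses of Gao's martingale MDP. Two ingredients are needed. First, uniformly bounded increments, which is immediate from Proposition~\ref{theorem_exist_sol_eq_Poisson}: $|M_k(g)-M_{k-1}(g)|\leq 2\|\tilde g\|_\infty$ almost surely, giving automatic control of all exponential moments. Second, a law of large numbers for the predictable quadratic variation, uniformly in the starting state. As in the proof of Theorem~\ref{pre_TCL}, $\ee[(M_k-M_{k-1})^2|\jointalg_{k-1}]=h(\hat x_{k-1})$ with $h=\Pi \tilde g^2 - (\Pi \tilde g)^2$ continuous and $\ee_{\nu_{inv}}(h)=\gamma_g^2$. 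Invoking the Hölder continuity of $h$ established in \cite{Hautecoeur} (mentioned in the remark following Theorem~\ref{pre_TCL}), Lemma~\ref{ineg_hold_lemma} delivers geometric convergence
\begin{equation*}
    \sup_{\hat x}\left|\tfrac{1}{n}\sum_{k=1}^{n}\ee_{\hat x}[h(\hat x_{k-1})]-\gamma_g^2\right|\longrightarrow 0,
\end{equation*}
and combined with the bounded increments of the auxiliary martingale $\sum_{k=1}^{n}(h(\hat x_{k-1})-\Pi h(\hat x_{k-2}))$ this upgrades to the uniform-in-$\hat x$ stochastic convergence required by \cite[Theorem~1.1]{Ga96}.

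With these hypotheses verified, \cite[Theorem~1.1]{Ga96} directly yields item (2) for $M_n(g)$ with the rate function $J$, which is the Legendre transform of $v\mapsto \tfrac12 \gamma_g^2 v^2$ and matches the three-case expression in the statement. Item (1) is the uniform convergence of the rescaled log moment generating function. It can be read off from the same reference, or proved by a direct expansion: setting $\theta_n=\tfrac{a(n)}{n}z$, the bound $|\theta_n\Delta M_k|\leq 2|z|\|\tilde g\|_\infty a(n)/n\to 0$ justifies
\begin{equation*}
\ee\bigl[e^{\theta_n\Delta M_k}\bigm|\jointalg_{k-1}\bigr]=1+\tfrac{\theta_n^2}{2}h(\hat x_{k-1})+O(\theta_n^3),
\end{equation*}
which after taking logarithms of the telescoping product and applying the uniform LLN on $\tfrac{1}{n}\sum h(\hat x_{k-1})$ above yields $\tfrac{n}{a(n)^2}\log \ee_{\hat x}[e^{\theta_n M_n}]\to \tfrac{z^2\gamma_g^2}{2}$ uniformly in $\hat x$.

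The main obstacle is precisely the uniformity in the initial state in the conditional variance convergence. Pointwise Birkhoff-type ergodicity is not enough; one needs a $\pp_{\hat x}$-probability bound on $|\tfrac{1}{n}\sum_{k=1}^n h(\hat x_{k-1})-\gamma_g^2|$ that decays faster than $\exp(-\varepsilon a(n)^2/n)$, uniformly in $\hat x$. This is the reason Hölder continuity of $h$ (and not merely continuity) enters the proof: combined with the geometric Wasserstein contraction of Theorem~\ref{thm:uniqueness} it produces a uniform-in-$\hat x$ estimate of $\ee_{\hat x}$ of the empirical average, and the remaining martingale oscillation is controlled through Azuma-type inequalities based on the uniform increment bound.
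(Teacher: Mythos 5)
Your architecture is the paper's: apply Gao's martingale MDP \cite[Theorem~1.1]{Ga96} to $M_n(g)$, get (A.1) for free from the bounded increments, and transfer back to $S_n(g)$ via $|S_n(g)-M_n(g)|\leq 2\operatorname{osc}(\tilde g)$. (The paper transfers only the cumulant generating function of item (1) and then deduces item (2) by G\"artner--Ellis, which is slightly cleaner than perturbing the sets $B$, but your exponential-equivalence version of the transfer is fine.) The divergence is in how the quadratic-variation condition (A.2) is verified, and there your route is both heavier than necessary and not fully closed. Gao's (A.2), as used in the paper, is an $L^\infty$ statement about the conditional expectations $\ee_{\hat x}\big[\tfrac1m\sum_i(\tilde g(\hat x_{n+j+i})-\Pi\tilde g(\hat x_{n+j+i-1}))^2\mid \jointalg_j\big]$; by the Markov property this equals the same unconditional expectation started from $\hat x_j$, so the essential supremum over $\omega$ and over $j$ is absorbed into a supremum over the initial state. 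What is then needed is exactly Lemma~\ref{lem_uniform1}: uniform-in-initial-state convergence of the Ces\`aro averages of $\ee_{\hat y}[h(\hat x_k)]$ to $\gamma_g^2$. No pathwise concentration of $\tfrac1n\sum_k h(\hat x_{k-1})$, and in particular no Azuma bound at scale $\exp(-\varepsilon a(n)^2/n)$, is required.

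Two consequences of this. First, you do not need the H\"older continuity of $h$ from \cite{Hautecoeur} (a paper in preparation, which the present article cites only in a remark precisely so as not to depend on it): since $h$ is continuous and Lipschitz functions are dense in $(C(\sta),\|\cdot\|_\infty)$, one approximates $h$ uniformly by a Lipschitz $h_\epsilon$ and applies the Wasserstein contraction of Theorem~\ref{thm:uniqueness} to $h_\epsilon$, which yields the uniform convergence of expectations directly. Second, the concentration step you sketch does not close as written: the decomposition $h(\hat x_{k-1})=(h(\hat x_{k-1})-\Pi h(\hat x_{k-2}))+\Pi h(\hat x_{k-2})$ controls the martingale part by Azuma but leaves another empirical average of a continuous function, so to finish you would need a solution of the Poisson equation for $h$, i.e.\ precisely the H\"older regularity of $h$ you imported from \cite{Hautecoeur}. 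The same caveat applies to your ``direct expansion'' alternative for item (1): the conditional factors $1+\tfrac{\theta_n^2}{2}h(\hat x_{k-1})+O(\theta_n^3)$ multiply inside a single expectation, so that route genuinely requires pathwise concentration. Citing Gao's theorem, whose hypotheses involve only conditional expectations, sidesteps all of this.
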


\begin{rem}
    If $\gamma_g^2>0$, $y\mapsto J(y)$ is continuous, then, for $B$ with non empty interior, the inequalities turn into equalities. Hence,
    $$\lim_{n\to\infty}\frac{n}{a^2(n)}\log\left(\sup_{\hat x\in \sta}\pp_{\hat x}\left(\tfrac{1}{a(n)}S_n(g)\in B\right)\right)=-\inf_{y\in B}\frac{y^2}{2\gamma_g^2}.$$
\end{rem}
We prove Theorem~\ref{thm_moderate} through an application of \cite[Theorem~1.1]{Ga96} to $(M_n(g))_n$. In our context the two assumptions of this theorem are:

(A.1) there exists $\delta>0$ such that,
$$\sup_{\hat x\in \sta}\sup_{n\geq 0} \|\mathbb E_{\hat x}(\exp(\delta|M_{n+1}(g)-M_n(g)|)|\mathcal F_n)\|_{L^\infty(\pp_{\hat x})}<+\infty,$$

(A.2) the following limit inferior holds:
$$\liminf_{n\to\infty,\frac{n}{m}\to0}\sup_{\hat x\in \P(\mathbb C^d)}\sup_{j\geq 0}\Big| \frac{1}{m} \mathbb{E}_{\hat{x}}\Big[ \sum_{i=1}^{m} \big( \tilde g(\hat{x}_{n+j+i}) - \Pi \tilde g (\hat{x}_{n+j+i-1})\big)^2\Big|\mathcal F_j \Big] - \gamma_g^2 \Big|=0.$$

Assumption~(A.1) is trivially satisfied with an arbitrary $\delta>0$ since $\sup_{n\in \nn}|M_{n+1}(g)-M_n(g)|\leq 2\|\tilde g\|_\infty$ $\pp_{\hat x}$-almost surely with $\tilde g$ a continuous solution to the Poisson equation~\eqref{eq:Poisson eq}. Assumption~(A.2) requires more work and cannot be derived from \cite{Ga96} since in that article the Markov chain is assumed Harris recurrent. The next lemma shows a uniform convergence of the Cesàro mean of $\ee_{\hat x}(h(\hat x_n))$ with $h$ defined in Equation~\eqref{conv_gamma_g} to $\gamma_g^2$. It is the key to the proof of Assumption~(A.2).

\begin{lem}\label{lem_uniform1} Let $g:\sta\to\cc$ and $\tilde g$ a continuous solution to the Poisson equation~\eqref{eq:Poisson eq}. Then,
\begin{equation*}
    \lim \limits_{n \rightarrow \infty} \sup \limits_{\hat{x} \in P(\cc^d)} \sup \limits_{j \geq 0} \left| \frac{1}{n} \mathbb{E}_{\hat{x}}\left[\sum_{i=1}^n \big(\tilde g(\hat{x}_{j+i})-\Pi \tilde g(\hat{x}_{j+i-1}) \big)^2 \right] - \gamma_g^2 \right| = 0.
\end{equation*}
\end{lem}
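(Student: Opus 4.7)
The plan is to reduce the double supremum to a single statement about the Cesàro mean of $\Pi^i h$ and then invoke Lemma~\ref{ineg_hold_lemma} after a density argument. Let $h:=\Pi\tilde g^2-(\Pi\tilde g)^2$, which is continuous by Proposition~\ref{prop:feller} applied to $\tilde g$ and $\tilde g^2$, and recall from~\eqref{conv_gamma_g} that $\ee_{\nu_{inv}}(h)=\gamma_g^2$. Conditioning the $i$-th term of the sum on $\jointalg_{j+i-1}$ gives $\ee_{\hat x}[(\tilde g(\hat x_{j+i})-\Pi\tilde g(\hat x_{j+i-1}))^2\mid \jointalg_{j+i-1}]=h(\hat x_{j+i-1})$, so after taking the full expectation the quantity inside the absolute value becomes
\[
\frac1n\sum_{i=1}^n \Pi^{j+i-1}h(\hat x)=\Pi^j u_n(\hat x),\qquad u_n:=\frac1n\sum_{i=0}^{n-1}\Pi^i h.
\]

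Because $\Pi$ is a Markov kernel, $\|\Pi f\|_\infty\le\|f\|_\infty$, and the constant $\gamma_g^2$ satisfies $\Pi^j\gamma_g^2=\gamma_g^2$. Therefore
\[
\sup_{j\ge 0}\sup_{\hat x\in\sta}\bigl|\Pi^j u_n(\hat x)-\gamma_g^2\bigr|=\sup_{j\ge 0}\|\Pi^j(u_n-\gamma_g^2)\|_\infty\le\|u_n-\gamma_g^2\|_\infty,
\]
so the sup over $j$ is harmless and it suffices to prove $\|u_n-\gamma_g^2\|_\infty\to 0$.

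To establish the latter I approximate the continuous but a priori non-Hölder function $h$ in sup norm. Since $\sta$ is compact metric, Lipschitz functions are uniformly dense in $C(\sta)$: given $\varepsilon>0$, choose a Lipschitz $h_\varepsilon$ with $\|h-h_\varepsilon\|_\infty<\varepsilon$. Then $\|u_n-u_n^\varepsilon\|_\infty\le\varepsilon$ with $u_n^\varepsilon:=\frac1n\sum_{i=0}^{n-1}\Pi^i h_\varepsilon$, and $|\gamma_g^2-\ee_{\nu_{inv}}(h_\varepsilon)|<\varepsilon$. Writing $n=mN+q$ with $0\le q<m$ and splitting the sum into $N$ consecutive blocks of length $m$ plus a remainder of length at most $2m$, the $k$-th block contributes $\frac1m\sum_{r=0}^{m-1}\Pi^{mk+r}h_\varepsilon(\hat x)$, which by Lemma~\ref{ineg_hold_lemma} applied with $\nu=\delta_{\hat x}$ satisfies
\[
\sup_{\hat x\in\sta}\Bigl|\tfrac1m\sum_{r=0}^{m-1}\Pi^{mk+r}h_\varepsilon(\hat x)-\ee_{\nu_{inv}}(h_\varepsilon)\Bigr|\le C_\varepsilon\lambda^{\alpha k},
\]
with $C_\varepsilon$ depending only on $h_\varepsilon$. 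Averaging the geometric bound over the $N$ blocks and handling the $O(m/n)$ boundary terms via the crude estimate $\|\Pi^i h_\varepsilon\|_\infty\le\|h_\varepsilon\|_\infty$ yields $\limsup_{n\to\infty}\|u_n^\varepsilon-\ee_{\nu_{inv}}(h_\varepsilon)\|_\infty=0$. Combining with the two $\varepsilon$-errors above gives $\limsup_{n\to\infty}\|u_n-\gamma_g^2\|_\infty\le 3\varepsilon$, and letting $\varepsilon\downarrow 0$ concludes the proof.

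The only delicate step is that the quantitative rate from Lemma~\ref{ineg_hold_lemma} is available only for Hölder test functions, whereas $h$ is merely continuous; this is handled by the density of Lipschitz functions together with the crucial fact that the constants in Lemma~\ref{ineg_hold_lemma} are uniform in the initial distribution, which is what makes the resulting bound on $\|u_n-\gamma_g^2\|_\infty$ genuinely uniform in $\hat x$.
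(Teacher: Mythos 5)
Your proof is correct and follows essentially the same route as the paper's: condition to reduce to the Cesàro mean of $\Pi^i h$ with $h=\Pi\tilde g^2-(\Pi\tilde g)^2$, absorb the supremum over $j$ via the Markov property (contractivity of $\Pi$ on $L^\infty$), approximate the merely continuous $h$ by a Lipschitz $h_\varepsilon$ in sup norm, and conclude from the uniform geometric convergence of Theorem~\ref{thm:uniqueness} applied to Lipschitz observables. The only difference is cosmetic: where the paper directly asserts a $C/n$ bound from the Wasserstein estimate, you spell out the blocking argument over windows of length $m$, which is exactly what that assertion hides.
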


\begin{proof}
Taking the conditional expectation with respect to $\jointalg_{j+i-1}$,
$$\ee_{\hat x}(\tilde g(\hat x_{j+i})-\Pi\tilde g(\hat x_{i+j-1}))^2=\ee_{\hat x}(h(\hat x_{i+j-1}))$$
with $h:\hat x\mapsto \Pi\tilde g^2(\hat x)-(\Pi \tilde g(\hat x))^2$ the continuous function defined in Equation~\eqref{conv_gamma_g}.

Since Lipschitz continuous functions are dense in the set of continuous functions with respect to the sup norm, for any $\epsilon>0$ there exists $h_\epsilon$ Lipschitz continuous such that $\|h-h_\epsilon\|_\infty\leq \epsilon$. It follows that
$$|\gamma_g^2-\ee_{\nu_{inv}}(h_\epsilon)|\leq\epsilon$$
and
$$\sup_{n\in \nn}\sup \limits_{\hat{x} \in P(\cc^d)} \sup \limits_{j \geq 0} \left| \frac{1}{n} \mathbb{E}_{\hat{x}}\left[\sum_{i=1}^n h(\hat x_{i+j-1}) \right] - \frac{1}{n} \mathbb{E}_{\hat{x}}\left[\sum_{i=1}^n h_\epsilon(\hat x_{i+j-1}) \right]\right|\leq \epsilon.$$
Since $\epsilon$ is arbitrary and the second bound is uniform in $n\in \nn$, it remains only to prove
$$\lim \limits_{n \rightarrow \infty} \sup \limits_{\hat{x} \in P(\cc^d)} \sup \limits_{j \geq 0} \left| \frac{1}{n} \sum_{i=1}^{n}\mathbb{E}_{\hat{x}}\left[h_\epsilon(\hat{x}_{j+i-1}) \right] - \ee_{\nu_{inv}}(h_\epsilon) \right| = 0$$
for any $\epsilon>0$.

First, by the Markov property,
$$\sup \limits_{\hat{x} \in P(\cc^d)} \sup \limits_{j \geq 0} \left| \frac{1}{n} \sum_{i=1}^{n}\mathbb{E}_{\hat{x}}\left[h_\epsilon(\hat{x}_{j+i-1}) \right] - \ee_{\nu_{inv}}(h_\epsilon) \right|\leq \sup \limits_{\nu \in \mathcal P(\sta)}\left| \frac{1}{n} \sum_{i=1}^{n}\mathbb{E}_{\nu}\left[h_\epsilon(\hat{x}_{i-1}) \right] -  \ee_{\nu_{inv}}(h_\epsilon) \right|,$$
where $\mathcal P(\sta)$ is the set of probability measures over $\sta$. By definition of the Wasserstein distance of order $1$, Theorem~\ref{thm:uniqueness} implies there exists $C>0$ such that
$$\sup \limits_{\nu \in \mathcal P(\sta)}\left| \frac{1}{n} \sum_{i=1}^{n}\mathbb{E}_{\nu}\left[h_\epsilon(\hat{x}_{i-1}) \right] - \ee_{\nu_{inv}}(h_\epsilon) \right|\leq C\tfrac1n.$$
Taking $n$ to infinity yields the lemma.
\end{proof}

\begin{proof}[Proof of Theorem~\ref{thm_moderate}]
As remarked in \cite{Ga96}, it is sufficient to prove Item~\ref{it:cumulant MDP} since then Gärtner--Ellis theorem (see \cite[Theorem~2.3.6]{dembo2009large}) implies the remainder of the theorem. We adapt the proof of Theorem~1.3 in \cite{Ga96}.

As mentioned before Lemma~\ref{lem_uniform1}, it only remains to prove Assumption~(A.2) of Theorem~1.1 in \cite{Ga96}.
For $m> 0$,
\begin{equation*}
    \begin{split}
    \sup_{\hat x}\sup_{n\geq 0}\sup_{j\geq 0}\Big| \frac{1}{m} \mathbb{E}_{\hat{x}}\Big[ \sum_{i=1}^{m} \big( \tilde g(\hat{x}_{n+j+i})& - \Pi \tilde g (\hat{x}_{n+j+i-1})\big)^2\Big|\mathcal F_j \Big] - \gamma_g^2 \Big|\\
    &= \sup_{\hat x}\sup_{n\geq 0}\sup_{j\geq 0}\Big| \frac{1}{m} \mathbb{E}_{\hat{x}_j}\Big[ \sum_{i=1}^{m} \big( \tilde g(\hat{x}_{n+i}) - \Pi \tilde g (\hat{x}_{n+i-1})\big)^2 \Big] - \gamma_g^2 \Big| \\
    & \leq \sup \limits_{\hat{y}} \sup \limits_{n \geq 0} \Big| \frac{1}{m} \mathbb{E}_{\hat{y}}\Big[ \sum_{i=1}^{m} \big( \tilde g(\hat{x}_{n+i}) - \Pi \tilde g (\hat{x}_{n+i-1})\big)^2 \Big] - \gamma_g^2 \Big| \\
    \end{split}
\end{equation*}
Lemma~\ref{lem_uniform1} implies the convergence to $0$ of the first line. This convergence being uniform in $n$, it implies 
$$\liminf_{n\to\infty,\frac{n}{m}\to0}\sup_{\hat x\in \P(\mathbb C^d)}\sup_{j\geq 0}\Big| \frac{1}{m} \mathbb{E}_{\hat{x}}\Big[ \sum_{i=1}^{m} \big( \tilde g(\hat{x}_{n+j+i}) - \Pi \tilde g (\hat{x}_{n+j+i-1})\big)^2\Big|\mathcal F_j \Big] - \gamma_g^2 \Big|=0,$$
which is Assumption~(A.2) of Theorem 1.1 in \cite{Ga96}. It follows that for all $z \in \rr$,
$$\lim \limits_{n \to \infty} \sup \limits_{\hat{x} \in P(\cc^d)} \Big| \frac{n}{a^2(n)} \log \Big( \mathbb{E}_{\hat{x}} \Big[ \exp{\frac{a(n)}{n} z M_n(g)} \Big] \Big) -\frac{z^2 \gamma_g^2}{2} \Big| = 0.$$
Since $| S_n(g)-M_n(g)|\leq 2\operatorname{osc}(\tilde g)$ almost surely, Item~\ref{it:cumulant MDP} holds and the theorem is proved.
\end{proof}

\section{Keep-Switch example}\label{KSKS}
To illustrate our results and demonstrate quantum trajectories can be relatively singular, we briefly discuss the ``Keep-Switch'' model presented in \cite[Section~2.1.2]{benoist2021entropy}. Let $\mu = \delta_{A_1} + \delta_{A_2}$ be a measure on $M_2(\cc)$, where 
$$A_1=\begin{pmatrix}
  \sqrt{p} & 0\\
  0 & \sqrt{q}
\end{pmatrix}\,\,\textrm{and}\,\,A_2=\begin{pmatrix}
  0 & \sqrt{p}\\
  \sqrt{q} & 0
\end{pmatrix}$$
with $p\in ]0,\frac12[$ and $q=1-p$. Direct computation shows that $A_1^*A_1+A_2^*A_2=\id$. Since $A_1^*A_1=\begin{pmatrix}
    p&0\\ 0&q
\end{pmatrix}$ is not proportional to the identity, the purification assumption \textbf{(Pur)} holds. The matrices $A_1$ and $A_2$ have a common non trivial invariant subspace only if they have a common eigenvector. The eigenvectors of $A_1$ are multiples of $e_a=\begin{pmatrix}
    1\\0
\end{pmatrix}$ and $e_b=\begin{pmatrix}
    0\\1
\end{pmatrix}$. They are not eigenvectors of $A_2$. Hence, $\mu$ verifies the ergodicity assumption \textbf{(Erg)} with $E=\cc^2$.
Direct computation shows that the invariant measure of Theorem~\ref{thm:uniqueness} is $\nu_{inv}=p\delta_{\hat e_a}+q\delta_{\hat e_b}$.

This relatively simple example already exhibits some unusual properties. For example, it is not $\varphi$-irreducible as defined in \cite{MT}. It cannot therefore be Harris recurrent.
\begin{prop}
    There does not exist a non trivial positive measure $\varphi$ over $\sta$ such that the Markov chain associated to $\mu$ is $\varphi$-irreducible.
\end{prop}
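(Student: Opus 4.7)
The plan is to exhibit two disjoint forward-invariant subsets that partition $\sta$, so that no starting point in one can ever reach positive-$\varphi$-measure subsets of the other. The natural candidates here are $F = \{\hat e_a, \hat e_b\}$ and its complement $U = \sta \setminus F$.

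I would first check by direct computation that both are $\Pi$-invariant. For $F$: $A_1 \cdot \hat e_a = \hat e_a$ and $A_2 \cdot \hat e_a = \hat e_b$, and symmetrically from $\hat e_b$, giving $\Pi(\hat e_a, F) = \Pi(\hat e_b, F) = 1$. For $U$: any $\hat x \in U$ has a representative $x = \alpha e_a + \beta e_b$ with both $\alpha \neq 0$ and $\beta \neq 0$, and then $A_1 x = \sqrt{p}\,\alpha\, e_a + \sqrt{q}\,\beta\, e_b$ and $A_2 x = \sqrt{p}\,\beta\, e_a + \sqrt{q}\,\alpha\, e_b$ each have both coordinates nonzero, so $A_1\cdot\hat x, A_2\cdot\hat x\in U$ and $\Pi(\hat x, U) = 1$.

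Given these two invariant sets, suppose some non-trivial positive measure $\varphi$ makes $\Pi$ $\varphi$-irreducible. Since $\sta = F \sqcup U$, either $\varphi(F) > 0$ or $\varphi(U) > 0$. In the first case, take any $\hat x \in U$: invariance of $U$ gives $\Pi^n(\hat x, F) = 0$ for all $n \geq 1$, contradicting $\varphi$-irreducibility applied to the set $F$. In the second case, take $\hat x = \hat e_a$ and $S = U$: invariance of $F$ forces $\Pi^n(\hat e_a, U) = 0$ for all $n \geq 1$, a contradiction once more. Hence no such $\varphi$ can exist. The only genuine content is the one-line verification of invariance of $U$ from the diagonal/antidiagonal structure of $A_1$ and $A_2$; the two-set partition argument is then immediate and there is no real obstacle.
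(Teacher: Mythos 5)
Your proof is correct and follows essentially the same route as the paper's: both arguments rest on the two complementary forward-invariant sets $\{\hat e_a,\hat e_b\}$ and its complement (the paper phrases the second invariance as the trajectory from $\hat e_+=e_a+e_b$ never having a vanishing coordinate, which is exactly your computation), followed by the same case analysis on which set $\varphi$ charges.
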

\begin{proof}
    Assume such a measure $\varphi$ exists. Then $\Pi\one_{\{\hat e_a,\hat e_b\}}(\hat e_a)=1$ implies $\varphi(\sta\setminus\{\hat e_a,\hat e_b\})=0$. Since $\varphi$ is non trivial, $\varphi(\{\hat e_a,\hat e_b\})>0$. However, setting $e_+=e_a+e_b$, $\pp_{\hat e_+}(\cup_n\{\hat x_n\in \{\hat e_a,\hat e_b\}\})=0$ as a consequence of $\pp_{\hat e_+}(\cup_n\{\langle e_a,x_n\rangle=0\vee \langle e_b,x_n\rangle=0\})=0$. Hence, there does not exists $\varphi$ such that the Markov chain is $\varphi$-irreducible.
\end{proof}
As mentioned at the beginning of Section~\ref{LLNN}, since the Markov chain is not Harris recurrent, following \cite[Theorem 17.1.7]{MT}, there must exist a non continuous $L^1(\nu_{inv})$ function such that the LLN does not hold. 
\begin{prop}\label{prop:no LLN KS}
    Let $g=\one_{\{\hat e_a,\hat e_b\}}$, then 
    $$\mathbb E_{\nu_{inv}}(g)=1$$
    but for any $\hat x\in \sta\setminus\{\hat e_a,\hat e_b\}$, any $n\geq0$,
    $$S_n(g)=0,\quad \pp_{\hat x}\as$$
\end{prop}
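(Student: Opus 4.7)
The first equality is immediate: since $\nu_{inv}=p\delta_{\hat e_a}+q\delta_{\hat e_b}$ is supported on $\{\hat e_a,\hat e_b\}$ where $g\equiv 1$, we have $\mathbb E_{\nu_{inv}}(g)=p+q=1$.

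For the second part, the plan is to show that the open set $U:=\sta\setminus\{\hat e_a,\hat e_b\}$ is forward-invariant under both $A_1$ and $A_2$ acting on the projective line, and then conclude by induction. Write any $\hat x\in U$ as $\hat x = \widehat{\alpha e_a+\beta e_b}$ with $\alpha\beta\neq 0$ (this is precisely the characterisation of points of $U$ in the eigenbasis of $A_1$). A direct computation gives
\begin{equation*}
A_1(\alpha e_a+\beta e_b)=\sqrt p\,\alpha e_a+\sqrt q\,\beta e_b,\qquad A_2(\alpha e_a+\beta e_b)=\sqrt p\,\beta e_a+\sqrt q\,\alpha e_b,
\end{equation*}
and since $p,q\in{]0,1[}$ and $\alpha,\beta\neq 0$, both resulting vectors have non-zero components on $e_a$ and on $e_b$. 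In particular they are non-zero (so the projective action is well defined) and neither is proportional to $e_a$ or $e_b$. Hence $A_i\cdot \hat x\in U$ for $i\in\{1,2\}$.

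Since $\mu=\delta_{A_1}+\delta_{A_2}$, under $\pp_{\hat x}$ the matrix $V_{k+1}$ takes values only in $\{A_1,A_2\}$, so an immediate induction on $k$ using the invariance above yields $\hat x_k\in U$ for every $k\ge 0$, surely (not merely almost surely) on the set where the trajectory is well defined, which has full $\pp_{\hat x}$-measure by \eqref{eq_defxn}. Consequently $g(\hat x_k)=\mathbf 1_{\{\hat e_a,\hat e_b\}}(\hat x_k)=0$ for every $k\ge 0$, and summing gives $S_n(g)=0$ $\pp_{\hat x}$-almost surely for every $n\ge 0$.

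There is no real obstacle here: the argument rests solely on the elementary observation that the two matrices defining the Keep-Switch model, although they swap the coordinates (for $A_2$) or rescale them (for $A_1$), neither of them can send a vector with two non-zero coordinates to one with a vanishing coordinate. This is what makes the pair $\{\hat e_a,\hat e_b\}$ an absorbing atom for $\nu_{inv}$ while being unreachable from any other initial state, providing the desired counter-example to an $L^1$ law of large numbers.
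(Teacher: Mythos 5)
Your proof is correct and follows essentially the same route as the paper: the paper also reduces the second claim to the observation that, from any $\hat x$ with both coordinates non-zero, the trajectory never reaches $\{\hat e_a,\hat e_b\}$ (i.e.\ $\pp_{\hat x}(\cup_n\{\hat x_n\in\{\hat e_a,\hat e_b\}\})=0$), which is exactly your forward-invariance of $U$ under $A_1$ and $A_2$. Your version merely spells out the coordinate computation and notes the invariance is sure rather than almost sure, which is a harmless strengthening.
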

\begin{proof}
    The fact that $\nu_{inv}(g)=1$ follows from the expression of the invariant measure.

    For the proof that $S_n(g)=0$, as in the proof of previous proposition, for any $\hat x\in \sta\setminus\{\hat e_a,\hat e_b\}$, $\pp_{\hat x}(\cup_n\{\hat x_n\in \{\hat e_a,\hat e_b\}\})=0$. Hence $g(\hat x_k)=0$ for all $n$ $\pp_{\hat x}\as$ and the proposition holds.
\end{proof}

The place dependent IFS defined by $p_i:\hat x\mapsto \|A_ix\|^2$ and $f_i:\hat x\mapsto A_i\cdot \hat x$ is not contracting as defined in \cite{barnsley_invariant_1988} either. Note that since $A_1$ and $A_2$ are invertible, $f_1$ and $f_2$ are well defined on $\mathrm{P}(\cc^2)$.
\begin{prop}
    The place dependent IFS defined by the Markov kernel $\Pi$ is not contracting. Namely,
    $$\sup_{\hat x, \hat y}\frac{\ee_{\hat x}(d(V_1\cdot \hat x,V_1\cdot \hat y))}{d(\hat x,\hat y)}\geq 1.$$
\end{prop}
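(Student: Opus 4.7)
The plan is to exhibit a single pair $(\hat x,\hat y)$ that already saturates the bound, which suffices since we only need the supremum to be $\geq 1$. The natural candidate is the pair of eigenvectors of $A_1$: take $\hat x = \hat e_a$ and $\hat y = \hat e_b$. Since $\langle e_a,e_b\rangle = 0$, the projective distance is $d(\hat x,\hat y) = \sqrt{1-|\langle e_a,e_b\rangle|^2} = 1$, which is already the maximum possible value of $d$.

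The key observation is that both matrices $A_1$ and $A_2$ permute the two-element set $\{\hat e_a,\hat e_b\}$. Indeed, $A_1 e_a = \sqrt p\, e_a$ and $A_1 e_b = \sqrt q\, e_b$, so $A_1$ fixes each of $\hat e_a$ and $\hat e_b$, while $A_2 e_a = \sqrt q\, e_b$ and $A_2 e_b = \sqrt p\, e_a$, so $A_2$ swaps them. Consequently, whatever value $V_1 \in \{A_1,A_2\}$ takes, the pair $(V_1\cdot\hat x, V_1\cdot\hat y)$ is either $(\hat e_a,\hat e_b)$ or $(\hat e_b,\hat e_a)$, and in both cases $d(V_1\cdot\hat x, V_1\cdot\hat y) = d(\hat e_a,\hat e_b) = 1$ almost surely.

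Taking the expectation under $\pp_{\hat x}$, which assigns probabilities $\|A_1 e_a\|^2 = p$ and $\|A_2 e_a\|^2 = q$ to $V_1=A_1$ and $V_1=A_2$ respectively, one obtains
\[
\ee_{\hat x}\bigl(d(V_1\cdot\hat x, V_1\cdot\hat y)\bigr) = p\cdot 1 + q\cdot 1 = 1 = d(\hat x,\hat y),
\]
so the ratio equals exactly $1$ for this pair, and therefore the supremum is at least $1$. There is essentially no obstacle here; the whole argument rests on choosing the two orthogonal fixed points of $A_1$, on which the dynamics reduces to a deterministic permutation by isometries of the two-point subspace they span.
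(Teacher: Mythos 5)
Your proof is correct and uses exactly the same example as the paper: the pair $(\hat e_a,\hat e_b)$, on which both $A_1$ and $A_2$ act as a permutation, so the distance $1$ is preserved almost surely and the ratio equals $1$. The probability computation ($p$ for $A_1$, $q$ for $A_2$ under $\pp_{\hat e_a}$) is also right, though it is not even needed since the integrand is constant.
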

\begin{proof}
    Take $\hat x=\hat e_a$ and $\hat y=\hat e_b$, then $d(\hat x,\hat y)=1$ and $A_1\cdot \hat x=\hat x$, $A_1\cdot \hat y=\hat y$, $A_2\cdot \hat x=\hat y$ and $A_2\cdot \hat y=\hat x$. Hence, $\frac{\ee_{\hat x}(d(V_1\cdot \hat x,V_1\cdot \hat y))}{d(\hat x,\hat y)}=1$ and the proposition is proved.
\end{proof}

For this example, the variance $\gamma_g^2$ takes a simple form.
\begin{prop}
    For any $g:\sta\to\cc$ Hölder continuous, $$\gamma_g^2=\ee_{\nu_{inv}}\left(\left(g(\hat x)-\ee_{\nu_{inv}}(g(\hat x))\right)^2\right)=p g^2(\hat e_a)+q g^2(\hat e_b)-(pg(\hat e_a)+q g(\hat e_b))^2.$$
\end{prop}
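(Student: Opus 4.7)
The plan is to exploit the very explicit form of $\nu_{inv}=p\delta_{\hat e_a}+q\delta_{\hat e_b}$ together with the transition probabilities out of the two atoms in order to evaluate both sides by hand. The second equality in the statement is just the expansion of the variance under a two-point measure, $\ee_{\nu_{inv}}((g-\ee_{\nu_{inv}}g)^2)=\ee_{\nu_{inv}}(g^2)-(\ee_{\nu_{inv}}(g))^2$, so all the content is in the first equality $\gamma_g^2=\ee_{\nu_{inv}}((g-\ee_{\nu_{inv}}g)^2)$.

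The first step I would carry out is a one-line computation of how $\Pi$ acts on functions restricted to $\{\hat e_a,\hat e_b\}$. From $\hat e_a$ one applies $A_1$ with probability $\|A_1e_a\|^2=p$ (landing back at $\hat e_a$) or $A_2$ with probability $\|A_2e_a\|^2=q$ (landing at $\hat e_b$). Symmetrically, from $\hat e_b$ one stays at $\hat e_b$ with probability $q$ and jumps to $\hat e_a$ with probability $p$. Consequently, for \emph{every} bounded measurable $f$,
$$\Pi f(\hat e_a)=\Pi f(\hat e_b)=pf(\hat e_a)+qf(\hat e_b)=\ee_{\nu_{inv}}(f).$$

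Second, I would let $\tilde g$ be a continuous solution of the Poisson equation $(\id-\Pi)\tilde g=g-\ee_{\nu_{inv}}(g)$ produced by Proposition~\ref{theorem_exist_sol_eq_Poisson}, and evaluate it at $\hat e_a$ and $\hat e_b$. Writing $c:=\Pi\tilde g(\hat e_a)=\Pi\tilde g(\hat e_b)$, the Poisson identity gives $\tilde g(\hat e_a)-c=g(\hat e_a)-\ee_{\nu_{inv}}(g)$ and $\tilde g(\hat e_b)-c=g(\hat e_b)-\ee_{\nu_{inv}}(g)$; subtracting yields the key identity
$$\tilde g(\hat e_a)-\tilde g(\hat e_b)=g(\hat e_a)-g(\hat e_b).$$

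Finally, I would compute $\gamma_g^2$ directly. Since $\nu_{inv}$ charges only $\hat e_a,\hat e_b$ and $\Pi\tilde g$ takes the common value $c$ at both,
$$\gamma_g^2=\ee_{\nu_{inv}}(\tilde g^2-(\Pi\tilde g)^2)=p\tilde g(\hat e_a)^2+q\tilde g(\hat e_b)^2-(p\tilde g(\hat e_a)+q\tilde g(\hat e_b))^2=pq\bigl(\tilde g(\hat e_a)-\tilde g(\hat e_b)\bigr)^2,$$
which by the previous step equals $pq(g(\hat e_a)-g(\hat e_b))^2$. A symmetric expansion of $\ee_{\nu_{inv}}((g-\ee_{\nu_{inv}}g)^2)$ gives the same value $pq(g(\hat e_a)-g(\hat e_b))^2$, so both quantities coincide with $pg^2(\hat e_a)+qg^2(\hat e_b)-(pg(\hat e_a)+qg(\hat e_b))^2$.

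There is no serious obstacle: the whole argument reduces to the fact that the induced chain on the support of $\nu_{inv}$ is a two-state chain whose one-step transition from either atom is already the invariant law, so that $\Pi\tilde g$ is constant on $\supp\nu_{inv}$ and $\tilde g$ needs only to be known up to its two values at $\hat e_a,\hat e_b$, which are pinned down by the Poisson equation up to the same irrelevant additive constant that drops out in $\tilde g^2-(\Pi\tilde g)^2$.
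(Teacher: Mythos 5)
Your proof is correct and follows essentially the same route as the paper: both rest on the observation that $\Pi f(\hat e_a)=\Pi f(\hat e_b)=pf(\hat e_a)+qf(\hat e_b)=\ee_{\nu_{inv}}(f)$, use the Poisson equation to pin down $\tilde g$ on $\{\hat e_a,\hat e_b\}$ up to an additive constant, and then evaluate $\gamma_g^2$ under the two-point measure. The only cosmetic difference is that the paper fixes the normalization so that $\Pi\tilde g$ vanishes on $\supp\nu_{inv}$, whereas you carry the constant $c$ and check that it cancels, which is equally valid.
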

\begin{proof}
    Since $\operatorname{supp}\nu_{inv}=\{\hat e_a,\hat e_b\}$, it is sufficient to compute $\tilde g(\hat e_a)$ and $\tilde g(\hat e_b)$. Direct computation shows there exists $\tilde g$ solution of the Poisson equation~\eqref{eq:Poisson eq} such that $\tilde g(\hat e_a)=g(\hat e_a)-\nu_{inv}(g)$ and $\tilde g(\hat e_b)=g(\hat e_b)-\nu_{inv}(g)$. Indeed, $\Pi g(\hat e_a)=\Pi g(\hat e_b)=\nu_{inv}(g)$. Moreover, $\nu_{inv}((\Pi \tilde g)^2)=0$. Hence, $\gamma_g^2=\nu_{inv}(\tilde g^2)=\ee_{\nu_{inv}}(g(\hat x))$.
\end{proof}

\paragraph{\bf Acknowledgements} The authors were supported by the ANR project ``ESQuisses", grant number ANR-20-CE47-0014-01 and by the ANR project ``Quantum Trajectories'' grant number ANR-20-CE40-0024-01. C.P.is also supported by the ANR projects Q-COAST ANR-19-CE48-0003. C.P. and T.B. are also supported by the program ``Investissements d'Avenir'' ANR-11-LABX-0040 of the French National Research Agency. 


\newcommand{\etalchar}[1]{$^{#1}$}

\end{document}